\tikzstyle{none}=[inner sep=0pt]
\tikzstyle{circ}=[circle,fill=black,draw,inner sep=3pt]
\tikzset{%
	symbol/.style={%
		draw=none,
		every to/.append style={%
			edge node={node [sloped, allow upside down, auto=false]{$#1$}}}
	}
}
\newcommand{\define}[1]{{\bf \boldmath{#1}}}
\theoremstyle{definition}
\newtheorem{theorem}{Theorem}
\newtheorem{definition}[theorem]{Definition}
\newtheorem{lemma}[theorem]{Lemma}
\newtheorem{example}[theorem]{Example}
\def\ld{\rotatebox[origin=c]{-90}{$\dashv$}} %rotate dash left
\newcommand{\sSet}{\mathsf{sSet}}
\newcommand{\Th}{\mathsf{Th}}
\newcommand{\RGph}{\mathsf{RGph}}
\newcommand{\Gph}{\mathsf{Gph}}
\newcommand{\Set}{\mathsf{Set}}
\newcommand{\Cart}{\mathsf{Cart}}
\newcommand{\Cat}{\mathsf{Cat}}
\newcommand{\Law}{\mathsf{Law}}
\newcommand{\Top}{\mathsf{Top}}
\newcommand{\Mon}{\mathsf{Mon}}
\newcommand{\Pos}{\mathsf{Pos}}
\newcommand{\Mod}{\mathsf{Mod}}
\newcommand{\Enr}{\mathsf{Enr}}
\newcommand{\Con}{\mathsf{Con}}
\newcommand{\FinSet}{\mathsf{FinSet}}
\newcommand{\NN}{\mathsf{N}}
\newcommand{\A}{\mathsf{A}}
\newcommand{\V}{\mathsf{V}}
\newcommand{\W}{\mathsf{W}}
\newcommand{\D}{\mathsf{D}}
\newcommand{\C}{\mathsf{C}}
\newcommand{\G}{\mathsf{G}}
\newcommand{\R}{\mathsf{R}}
\newcommand{\X}{\mathsf{X}}
\newcommand{\J}{\mathsf{J}}
\newcommand{\T}{\mathsf{T}}
\newcommand{\Kl}{\mathsf{Kl}}
\newcommand{\SKI}{\mathsf{SKI}}
\newcommand{\F}{\mathrm{F}}
\newcommand{\FC}{\mathrm{FC}}
\newcommand{\FP}{\mathrm{FP}}
\newcommand{\FS}{\mathrm{FS}}
\newcommand{\UC}{\mathrm{UC}}
\newcommand{\UP}{\mathrm{UP}}
\newcommand{\US}{\mathrm{UsS}}
\newcommand{\op}{\mathrm{op}}
\newcommand{\Obj}{\mathrm{Ob}}
\newcommand{\maps}{\colon}
\newcommand{\id}{\mathrm{id}}
\title{Enriched Lawvere Theories for Operational Semantics}
\author{John C.\ Baez and Christian Williams
  \\ Department of Mathematics \\
  U.\ C.\ Riverside \\
  Riverside, CA \\
  92521 USA
\email{baez@math.ucr.edu,  cwill041@ucr.edu}
}
\begin{document}
\maketitle

\begin{abstract} 
Enriched Lawvere theories are a generalization of Lawvere theories that allow us to describe the operational semantics of formal systems.  For example, a graph-enriched Lawvere theory describes structures that have a \emph{graph} of operations of each arity, where the vertices are operations and the edges are \emph{rewrites} between operations. Enriched theories can be used to equip systems with operational semantics, and maps between enriching categories can serve to translate between different forms of operational and denotational semantics.  The Grothendieck construction lets us study all models of all enriched theories in all contexts in a single category.  We illustrate these ideas with the $SKI$-combinator calculus, a variable-free version of the lambda calculus.
\end{abstract}

\section{Introduction}
\label{sec:intro}

Formal systems are not always explicitly connected to how they operate in practice.   Lawvere theories  \cite{lawvere} are an excellent formalism for describing algebraic structures obeying equational laws, but they do not specify how to compute in such a structure, for example taking a complex expression and simplifying it using rewrite rules.   Recall that a Lawvere theory is a category with finite products $\T$ generated by a single object $t$, for ``type'', and morphisms $t^n \to t$ representing $n$-ary operations, with commutative diagrams specifying equations.   There is a theory for groups, a theory for rings, and so on.   We can specify algebraic structures of a given kind in some category $\C$ with finite products by a product-preserving functor $\mu \maps\T \to \C$.   This is a simple and elegant form of \emph{denotational} semantics.    However, Lawvere theories know nothing of \emph{operational} semantics.  Our goal here is to address this using ``enriched'' Lawvere theories.

In a Lawvere theory, the objects are types and the morphisms are terms; however, there are no relations between terms, only equations. The process of computing one term into another should be given by hom-objects with more structure.  In operational semantics, program behavior is often specified by labelled transition systems, or labelled directed multigraphs \cite{sos}.  The edges of such a graph represent rewrites:
\begin{center}\begin{tikzcd}(\lambda x.x+x \; \; 2) \ar{r}{\beta} & 2+2 \ar{r}{+} & 4\end{tikzcd}\end{center}
We can use an enhanced Lawvere theory in which, rather than merely \emph{sets} of morphisms, there are \emph{graphs} or perhaps \emph{categories}. Enriched Lawvere theories are exactly for this purpose.

In a theory $\T$ enriched in a category $\V$ of some kind of ``directed object'', including graphs, categories, and posets, the theory has the following interpretation:

\[\begin{array}{rl}
\text{types: } & \text{objects of } \T\\
\text{terms: } & \text{morphisms of } \T\\
\text{equations between terms: } & \text{commuting diagrams}\\
\text{rewrites between terms: } & \text{``edges'' in hom in } \V\\
\end{array}\]

To be clear, this is not a new idea. Using enriched Lawvere theories for operational semantics has been explored in the past. For example, category-enriched theories have been studied by Seely \cite{seely} for the $\lambda$-calculus, and poset-enriched ones by Ghani and L\"uth \cite{ghani} for understanding ``modularity'' in term rewriting systems.  They have been utilized extensively by Power, enriching in $\omega$-complete partial orders to study recursion \cite{compeffects} -- in fact, there the simplified ``natural number'' enriched theories which we explore were implicitly considered.

The goal of this paper is to give a simple unified explanation of enriched Lawvere theories and some of their applications to operational semantics.    We aim our explanations at readers familiar with category theory but not yet enriched categories.  To reduce the technical overhead we only consider enrichment over cartesian closed categories.

In general for a cartesian closed category $\V$, a \textbf{$\V$-theory} is a $\V$-enriched Lawvere theory with natural number arities. We consider $\V$ as a choice of ``method of computation'' for algebraic theories.  The main idea of this paper is that product-preserving functors between enriching categories allow for the translation between different kinds of semantics.   This translation could be called ``change of computation''---or, following standard mathematical terminology, \textbf{change of base}.

Because operational semantics uses graphs to represent terms and rewrites, one might expect some category like $\Gph$, the category of directed multigraphs, to be our main example of enriching category: that is, the ``thing'' of $n$-ary operations, or $n$-variable terms in a theory, is a directed graph whose edges are rewrites. This is known as \textit{small-step} operational semantics, meaning each edge represents a single instance of a rewrite rule.

When studying formal languages, one wants to pass from this local view to a global view: given a term, one cares about its possible evolutions after not only one rewrite but any finite sequence of rewrites. We study how programs operate in finite time.  In computer science, this corresponds to defining a rewrite relation and forming its transitive closure, called \textit{big-step} operational semantics. This is the classic example which change of base aims to generalize.

However, there is a subtlety.  We may try to model the translation from small-step to big-step operational semantics using the ``free category'' functor, which for any directed multigraph forms the category whose objects are vertices and morphisms are finite paths of edges.   However, this functor does \emph{not} preserve products.   One might hope to cure this using a better-behaved variant of directed multigraphs, such as reflexive graphs.    One advantage of reflexive graphs is that that each vertex has a distinguished edge from it to itself; these describe rewrites that ``do nothing''.  Thus, in a product of reflexive graphs there are edges describing the process of rewriting one factor while doing nothing in the other.  This lets us handle parallelism.  Unfortunately, as we shall explain, the free category functor from reflexive graphs to categories still fails to preserve products.

To obtain a product-preserving change of base taking us from small-step to big-step operational semantics, it seems the cleanest solution is to generalize graphs to \emph{simplicial sets}.  A simplicial set is a contravariant functor from the category $\Delta$ of finite linear orders and monotone maps to the category of sets and functions.  It can be visualized as a space built from ``simplices'', which generalize triangles to any dimension: point, line, triangle, tetrahedron, etc. For an introduction to simplicial sets, see Friedman \cite{sset}.  We use $\sSet$ to denote the category of simplicial sets, namely $\Set^{\Delta^{\op}}$.

Simplicial sets allow one to generalize rewriting to \emph{higher-dimensional} rewriting, but this is not our focus here.  Indeed, we only need two facts about simplicial sets in this paper:

\begin{itemize}
\item There is a full and faithful embedding of $\RGph$, the category of reflexive graphs, in $\sSet$, so we can think of a reflexive graph as a special kind of simplicial set (namely one whose $n$-simplices for $n > 1$ are all degenerate).
\item The free category functor $\FC \maps \sSet \to \Cat$, often called ``realization'', preserves products.
\end{itemize}

We thus obtain a spectrum of cartesian closed categories $\V$ to enrich over, each connected to the next by a product-preserving functor, which allow us to examine the computation of term calculi in various ways: \\
\[\begin{array}{lll}
\textbf{Simplicial Sets} & \sSet \text{-theories represent ``small-step'' operational semantics:} \\ & \text{--- an edge is a \textit{single} term rewrite.}\\
\textbf{Categories} & \Cat\text{-theories represent ``big-step'' operational semantics:} \\  & \text{(Often this means a rewrite to a normal form. We use the term more generally.)}\\ & \text{--- a morphism is a \textit{finite sequence} of rewrites.}\\
    \textbf{Posets} & \Pos \text{-theories represent ``full-step'' operational semantics:}\\ & \text{--- a boolean is the \textit{existence} of a big-step rewrite.}\\
\textbf{Sets} & \Set \text{-theories represent denotational semantics:} \\ & \text{--- an element is a \textit{connected component} of the rewrite relation.} 
\end{array}\]

In Section \ref{sec:lawvere} we review Lawvere theories as a more explicit, but equivalent, presentation of finitary monads. In Section \ref{sec:enrichment}, we recall the basics of enrichment over cartesian closed categories.   In Section \ref{sec:enriched_lawvere} we give the central definition of $\V$-theory, adapted from the work of Lucyshyn-Wright \cite{lucyshyn-wright}.  Using his work we show that a $\V$-theory $\T$ gives a monadic adjunction between $\V$ and the $\V$-category of models of $\T$ in $\V$.  This generalizes a fundamental result for Lawvere theories.

In Section \ref{sec:base_change} we discuss how suitable functors between enriching categories induce \textit{change of base}: they transform theories, and their models, from one method of rewriting to another.   Our main examples arise from this chain of adjunctions:
\[\begin{tikzcd}[column sep=small]
\sSet  \arrow[bend left,below]{rr}{\FC}
& \ld &
\arrow[bend left,above]{ll}{\US} 
\Cat \arrow[bend left,below]{rr}{\FP}
& \ld &
\arrow[bend left,above]{ll}{\UC} \Pos \arrow[bend left,below]{rr}{\FS}
& \ld &
\Set \arrow[bend left,above]{ll}{\UP}
\end{tikzcd}\]
The right adjoints here automatically preserve finite products, but the left adjoints do as well, and
these are what we really need:

\begin{itemize}
\item
The functor $\FC \maps \sSet \to \Cat$ maps a simplicial set (for example a reflexive graph) to the category it freely generates.  Change of base along $\FC$ maps small-step operational semantics to big-step operational semantics.
\item 
The functor $\FP \maps \Cat \to \Pos$ maps a category $C$ to the poset whose elements are objects of $C$, with $c \le c'$ iff $C$ has a morphism from $c$ to $c'$.
Change of base along $\FP$ maps big-step operational semantics to full-step operational semantics.
\item 
The functor $\FS \maps \Pos \to \Set$ maps a poset $P$ to the set of ``components'' of $P$, where $p,p' \in P$ are in the same component if $p \le p'$. Change of base along $\FS$ maps full-step operational semantics to denotational semantics.
\end{itemize}

In Section \ref{sec:V-theories} we show that models of all $\V$-theories for all enriching $\V$ can be assimilated into one category using the Grothendieck construction.  In Section \ref{sec:applications} we bring all the strands together and demonstrate these concepts in applications.  First we consider the $SKI$-combinator calculus, and then we show how theories enriched over the category of labelled graphs can be used to study bisimulation.

\subsection*{Acknowledgements}

This paper builds upon the ideas of Mike Stay and Greg Meredith presented in ``Representing operational semantics with enriched Lawvere theories''  \cite{roswelt}.  We appreciate their offer to let us develop this work further for use in the innovative distributed computing system RChain, and gratefully acknowledge the support of Pyrofex Corporation.  We also thank Richard Garner, Todd Trimble and others at the $n$-Category Caf\'e for classifying cartesian closed categories where every object is a finite coproduct of copies of the terminal object \cite{nCafe}.

\section{Lawvere Theories}
\label{sec:lawvere}

Algebraic structures are traditionally treated as sets equipped with operations obeying equations, but we can generalize such structures to live in any category with finite products.  For example,
given any category $\C$ with finite products, we can define a monoid internal to $\C$ to consist of:
\[\begin{array}{rl}
\text{an object} & M\\
\text{an identity element} & e\maps 1 \to M\\
\text{and multiplication} & m\maps M^2 \to M\\
\text{obeying the associative law} & m \circ (m \times M) = m \circ (M \times m)\\
\text{and the right and left unit laws} & m \circ (e  \times \id_M) = \id_M = m \circ (\id_M \times e).\\
\end{array}\]
Lawvere theories formalize this idea.  For example, there is a Lawvere theory $\Th(\Mon)$, the category with finite products freely generated by an object $t$ equipped with an identity element $e \maps 1 \to t$ and multiplication $m \maps t^2 \to t$ obeying the associative law and unit laws listed above.    This captures the ``Platonic idea'' of a monoid internal to a category with finite products.  A monoid internal to $\C$ then corresponds to a functor $\mu \maps \T \to \C$ that preserves finite products.  

In more detail, let $\NN$ be any skeleton of the category of finite sets $\FinSet$.  Because $\NN$ is the free category with finite coproducts on $1$, $\NN^\op$ is the free category with finite products on $1$.   A \define{Lawvere theory} is a category with finite products $\T$ equipped with a functor $\tau \maps \NN^\op \to \T$ that is the identity on objects and preserves finite products.   Thus, a Lawvere theory is essentially a category generated by one object $\tau(1) = t$ and $n$-ary operations $t^n \to t$, as well as the projection and diagonal morphisms of finite products.

For efficiency let us call a functor that preserves finite products \define{cartesian}.   Lawvere theories are the objects of a category $\Law$ whose morphisms are cartesian functors $f \maps \T\to \T'$ that obey $f\tau = \tau'$.   More generally, for any category with finite products $\C$, a \define{model} of the Lawvere theory $\T$ in $\C$ is a cartesian functor $\mu \maps \T \to \C$.  The models of $\T$ in $\C$ are the objects of a category $\Mod(\T,\C)$, in which the morphisms are natural transformations.  

A theory can thus have models in many different contexts.   For example, there is a Lawvere theory $\Th(\Mon)$, the theory of monoids, described as above.  Ordinary monoids are models of this theory in $\Set$, while topological monoids are models of this theory in $\Top$.

For completeness, it is worthwhile to mention the \textit{presentation} of a Lawvere theory: how exactly does the above ``sketch'' of $\Th(\Mon)$ produce a category with finite products?  It is precisely analogous to the presentation of an algebra by generators and relations: we form the free category with finite products on the data given, and impose the required equations. The result is a category whose objects are powers of $t$, and whose morphisms are composites of products of the morphisms in $\Th(\Mon)$, projections, deletions, symmetries and diagonals.  A detailed account was given by Barr and Wells \cite[Chap.\ 4]{barrwells}.

\iffalse
\begin{center}
	\begin{minipage}{.2 \textwidth}
		\begin{prooftree}
			\Axiom$a \; ,\; b \fCenter \; :\T$
			\UnaryInf$a\times b \fCenter \; : \T$
		\end{prooftree}
	\end{minipage} \qquad
	\begin{minipage}{.2 \textwidth}
		\begin{prooftree}
			\Axiom$f \; :a\to b\; \fCenter ,\; g\; :c\to d$
			\UnaryInf$f\times g \; : \; a\;\times\fCenter \;b \; \to c\times d$
		\end{prooftree}
	\end{minipage} \qquad \qquad
	\begin{minipage}{.2 \textwidth}
		\begin{prooftree}
			\Axiom$h: d\to a\; ,\; \fCenter k:e\to b$
			\doubleLine
			\UnaryInf$\big\langle h,k \big\rangle :\; e \fCenter\;\to a\times b$
		\end{prooftree}
	\end{minipage}
\end{center}
\begin{center}
\begin{minipage}{.2 \textwidth}
	\begin{prooftree}
		\Axiom$a\times b \; \fCenter : \T$
		\UnaryInf$\pi_1 : a\times b\to a \; \fCenter \; ,\; \pi_2 : a\times b\to b$
	\end{prooftree}
\end{minipage} \qquad \qquad \qquad
\begin{minipage}{.2 \textwidth}
	\begin{prooftree}
		\Axiom$\big\langle h,k \big\rangle :\; e\; \fCenter\to a\times b$
		\UnaryInf$\pi_1 \big\langle h,k \big\rangle \equiv h \; \fCenter ,\; \pi_2 \big\langle h,k \big\rangle \equiv k$
	\end{prooftree}
\end{minipage}
\end{center}
\fi

In 1965, Linton \cite{linton} proved that Lawvere theories correspond to ``finitary monads'' on the category of sets.   For every Lawvere theory
$\T$, there is an adjunction:
\[\begin{tikzcd}[column sep=small]
\Set \arrow[bend left,below]{rr}{F}
& {\phantom{AA} \ld} &
\arrow[bend left,above]{ll}{U} \Mod(\T,\Set).
\end{tikzcd}\]
The functor 
\[  U \maps \Mod(\T,\Set) \to \Set \]
sends each model $\mu$ to its underlying set, $X = \mu(\tau(1))$. 
Its left adjoint, the free model functor 
\[       F \maps\Set \to \Mod(\T,\Set), \]
sends each finite set $n \in \NN$ to the representable functor $\T(\tau(n),-)\maps \T \to \Set$, and in general any set $X$ to the colimit of all such representables as $n$ ranges over the poset of finite subsets of $X$.   In rough terms, $F(X)$ is the model of all $n$-ary operations from $\T$ on the set $X$.

If we momentarily abbreviate $\Mod(\T,\Set)$ as $\Mod$, we obtain an adjunction
\[   \Mod(F(n),\mu) = \Mod(\T(\tau(n),-),\mu) \cong \mu(\tau(n)) \cong \mu(\tau(1))^n = \Set(n,U(\mu))\] 
where the left isomorphism arises from the Yoneda lemma, and the right isomorphism from the product preservation of $\mu$.   

This adjunction induces a monad $T$ on $\Set$:
\begin{equation}
T(X) = \int^{n\in \NN} X^n \times \T(n,1).
\end{equation}
The integral here is a coend, essentially a coproduct quotiented by the equations of the theory and the equations induced by the cartesian structure of the category. This forms the set of all terms that can be constructed from applying the operations to the elements, subject to the equations of the theory.  The monad constructed this way is always \define{finitary}: that is, it preserves filtered colimits \cite{adamekrosicky}, or its action on sets is determined by its action on finite sets.

Conversely, for a monad $T$ on $\Set$, its Kleisli category $\Kl(T)$ is the category of all free algebras of the monad, which has all coproducts. There is a functor $k\maps \Set \to \Kl(T)$ that is the identity on objects and preserves coproducts. Thus,
\[ k^{\op}\maps \Set^{\op} \to \Kl(T)^{\op} \]
is a cartesian functor, and restricting its domain to $\NN^{\op}$ is a Lawvere theory $k_T$. To see what this is doing, note that:
$$\Kl(T)^{\op}(n,m) = \Kl(T)(m,n) = \Set(m,T(n))$$
where the latter is considered as $m$ $n$-ary operations in the Lawvere theory $k_T$.
When $T$ is finitary, the monad arising from this Lawvere theory is naturally isomorphic to $T$ itself.

This correspondence sets up an equivalence between the category $\Law$ of Lawvere theories and the category of finitary monads on $\Set$.  There is also an equivalence between the category $\Mod(\T,\Set)$ of models of a Lawvere theory $\T$ and the category of algebras of the corresponding finitary monad $T$.  Furthermore, all this generalizes with $\Set$ replaced by any ``locally finitely presentable'' category \cite{adamekrosicky}. For more details see \cite{barrwells,lawvere,milewski}.\\

One final point, provided to us by Mike Stay: while monads are often associated with functional programming languages such as Haskell, Lawvere theories correspond to \emph{interfaces} or abstract classes in object-oriented programming. In these one declares various constants, types, and abstract functions satisfying tests, and then one implements the interface by assigning these elements, sets, functions, and equations---precisely a model in $\Set$. While people think of monads as the main example of ``categories in programming'', in fact Lawvere theories are ubiquitous.

\section{Enrichment}
\label{sec:enrichment}

To incorporate the aspect of computation, we now turn to Lawvere theories that have hom-\emph{objects} rather than mere hom-\emph{sets}.  To do this we use enriched category theory \cite{kelly} and replace sets with objects of a cartesian closed category $\V$, called the ``enriching'' category or ``base''.    A $\V$-enriched category or \textbf{$\V$-category} $\C$ is:
\[\begin{array}{rl}
\text{a collection of objects} & \Obj(\C)\\
\text{a hom-object function} & \C(-,-)\maps \Obj(\C) \times \Obj(\C) \to \Obj(\V)\\
\text{composition morphisms} & \circ_{a,b,c}\maps\C(b,c) \times \C(a,b) \to \C(a,c) \quad \forall a,b,c \in \Obj(\C)\\
\text{identity-assigning morphisms} & id_a\maps 1_\V \to\C(a,a) \quad \forall a \in \Obj(\C)\\
\end{array}\]
such that composition is associative and unital.  A \textbf{$\V$-functor} $F \maps \C \to \D$ is:
\[\begin{array}{rl}
\text{a function} & F\maps \Obj(\C) \to \Obj(\D)\\
\text{a collection of morphisms} & F_{ab}\maps \C(a,b) \to \D(F(a),F(b)) \quad \forall a,b \in \C\\
\end{array}\]
such that $F$ preserves composition and identity.  A \textbf{$\V$-natural transformation} $\alpha\maps F \Rightarrow G$ is:
\[\begin{array}{rl}
\text{a family} & \alpha_a \maps 1_\V \to \D(F(a),G(a)) \quad \forall a \in \Obj(\C)\\
\end{array}\]
such that $\alpha$ is ``natural'' in $a$: an evident square commutes.   There is a 2-category \textbf{$\V\Cat$} of $\V$-categories, $\V$-functors, and $\V$-natural transformations. 

We can construct new $\V$-categories from old ones by taking products or opposites, in obvious ways.   There is also a $\V$-category denoted $\underline{\V}$ with the same objects as 
$\V$ and with hom-objects given by the internal hom:
\[   \underline{\V}(v,w) = w^v   \quad \forall v,w \in \V  .\]
The concepts of adjunction and monad generalize straightforwardly to $\V$-categories,
and when we speak of an adjunction or monad in the enriched context this generalization
is what we mean \cite{kelly}.   For example, there is an adjunction
\[    \underline{\V}(u \times v, w) \cong \underline{\V}(u, w^v ) \]
called ``currying''.

We can generalize products and coproducts to the enriched context.
Given a $\V$-category $\C$, the \textbf{$\V$-coproduct} of an $n$-tuple of objects $b_1, \dots, b_n \in \Obj(C)$ is an object $b$ equipped with a $\V$-natural isomorphism
\[           C(b,-) \cong \prod_{i = 1}^n C(b_i,-). \]
If such an object exists, we denote it by $\sum_{i=1}^n b_i$.  
This makes sense even when $n = 0$: a 0-ary $\V$-coproduct in $\C$ is called a \textbf{$\V$-initial object} and denoted as $0_\C$.   When $\V$ is cartesian
closed, any finite coproduct that exists in $\V$ is also a $\V$-coproduct in $\underline{\V}$.   In particular,
\[          u^{v+w} \cong u^v \times u^w \; \textrm{ and } \;  w^0 \cong 1_\V \]
whenever $0$ is an initial object of $\V$.  Conversely, any finite $\V$-coproduct that
exists in $\V$ is also a coproduct in the usual sense.

Similarly, a \textbf{$\V$-product} of objects $b_1, \dots , b_n \in \Obj(C)$ is an object $b$ equipped with a $\V$-natural isomorphism
\begin{equation}
\label{eq:prod}          \C(-,b) \cong \prod_{i=1}^n \C(-,b_i). 
\end{equation}
If such an object $b$ exists, we denote it by $\prod_{i=1}^n b_i$.   A 0-ary product in $\C$ is called a \textbf{$\V$-terminal object} and denoted as $1_\C$.     Whenever $\V$ is cartesian closed, the finite products in $\V$ are also $\V$-products in $\underline{\V}$.  In particular,
\[           (u \times v)^w \cong u^w \times v^w \; \textrm{ and } \; 1_\V^w \cong 1_\V \]
where our chosen terminal object $1_\V$ is also $\V$-terminal.
Conversely, any finite $\V$-product in $\V$ is also a product in the usual sense. 

A general $\V$-category $\C$ does not exactly have projections from a 
$\V$-product to its factors, since given two objects $c, c' \in \Obj(\C)$ there is not, fundamentally, a \emph{set} of morphisms from $c$ to $c'$.  Instead there is the hom-object $\C(c,c')$, which is an object of $\V$.    However, any object $v$ of $\V$ has a set of \define{elements}, namely morphisms $f \maps 1_\V \to v$.   Elements of $\C(c,c')$ act like morphisms from $c$ to $c'$.  

In particular, any $\V$-product $b = \prod_{i=1}^n b_i$ gives rise to elements
\[    p_i \maps 1_\V \to \C(b,b_i)  \]
which serve as substitutes for the projections in a usual product.   These elements
are defined as composites
\[    1_\V \stackrel{id_b}{\longrightarrow} \C(b,b) \stackrel{\sim}{\longrightarrow} \prod_{i=1}^n \C(b,b_i) \to \C(b,b_i)   \]
where the isomorphism comes from Eq.\ \eqref{eq:prod} and the last arrow is a projection in $\V$.

Even better, we can bundle up all these elements $p_i$ into a single element
\[    p \maps 1_\V \to \prod_{i=1}^n \C(b,b_i)  \]
which serves as a substitute for the universal cone in a usual product.  Starting from $p$ we can recover the $\V$-natural isomorphism in Eq.\ \eqref{eq:prod} as follows:
\begin{equation}
\label{eq:projection}    \C(-,b) \stackrel{\sim}{\longrightarrow} 1_\V \times \C(-,b)  
 \xrightarrow{p \times 1} \prod_{i=1}^n \C(b,b_i) \times \C(-,b) \longrightarrow \prod_{i=1}^n \C(-,b_i)
 \end{equation}
 where the last arrow is given by composition.   Thus, we say a \define{universal cone} exhibiting $b$ as the $\V$-product of objects $b_1, \dots, b_n$ is an element $p \maps 1_\V \to \prod_{i=1}^n \C(b,b_i)$ such that the $\V$-natural transformation $\C(-,b) \to \prod_{i=1}^n \C(-,b_i)$ given by Eq.\ \eqref{eq:projection} is an isomorphism.

The advantage of this reformulation is that we can say a $\V$-functor $F \maps \C \to \D$ \textbf{preserves finite $\V$-products} if for every universal cone $p \maps 1_\V \to \prod_{i=1}^n \C(b,b_i)$ exhibiting $b$ as the $\V$-product of the objects $b_i$, the composite
\[       1_\V \stackrel{p}{\longrightarrow} \prod_{i=1}^n \C(b,b_i) \xrightarrow{\prod_i F} \D(F(b),F(b_i)) \]
is universal cone exhibiting $F(b)$ as the $\V$-product of the objects $F(b_i)$.

A bit more subtly, generalizing the exponentials in $\V$, a $\V$-category $\C$ can have ``powers''.    Given $v \in \Obj(\V)$, we say an object $c^v \in \Obj(\C)$ is a \textbf{$v$-power} of $c \in \Obj(\C)$ if it is equipped with a $\V$-natural isomorphism
\begin{equation}
\label{eq:power}
 \C(-,c^v) \cong \C(-,c)^v.
\end{equation}
In the special case $\V = \Set$  this forces $c^v$ to be the $v$-fold product of copies of $c$.  %In general, if $\C$ has $v$-powers, we obtain a $\V$-functor
%\[     -^v \maps \C \to \C  \]
%by choosing a $v$-power of each object of $\C$.
As with $\V$-products, it is useful to repackage the isomorphism of Eq.\ \eqref{eq:power} so we can say what it means for a $\V$-functor to preserve $v$-powers.   First, note that this isomorphism gives rise to an element 
\[     q \maps 1_\V \to  \C(c^v,c)^v , \]
namely the composite
\[ 1_\V \stackrel{id_{c^v}}{\longrightarrow}  \C(c^v,c^v) \stackrel{\sim}{\longrightarrow} \C(c^v,c)^v .\]
Conversely, any element $q \maps 1_\V \to  \C(c^v,c)^v$ determines a $\V$-natural transformation $e\maps C(-,c^v) \to C(-,c)^v$, and we say $e$ is a \textbf{universal cone} if this $\V$-natural transformation is an isomorphism.  Next, suppose $\C$ and $\D$ are $\V$-categories with $v$-powers.  We say a $\V$-functor $F\maps \C\to \D$ \textbf{preserves $v$-powers} if it maps universal cones to universal cones.

There are just a few more technicalities. A category is \textbf{locally finitely presentable} if it is the category of models for a finite limits theory, and an object is \define{finite} if its representable functor is \define{finitary}: that is, it preserves filtered colimits \cite{adamekrosicky}.   A $\V$-category $\C$ is \textbf{locally finitely presentable} if its underlying category $\C_0$ is locally finitely presentable, $\C$ has finite powers, and $(-)^x\maps \C_0 \to \C_0$ is finitary for all finitely presentable $x$.  The details are not crucial here: all categories to be considered are locally finitely presentable. We will use  $\V_f$ to denote the full subcategory of $\V$ of finite objects: in $\sSet$, these are simplicial sets with finitely many $n$-simplices for each $n$.

\section{Enriched Lawvere Theories}
\label{sec:enriched_lawvere}

Power introduced the notion of enriched Lawvere theory about twenty years ago, ``in seeking a general account of what have been called notions of computation'' \cite{power}. The original definition is as follows: for a symmetric monoidal closed category $(\V,\otimes,1)$, a ``$\V$-enriched Lawvere theory'' is a $\V$-category $\T$ that has powers by objects in $\V_f$, equipped with an identity-on-objects $\V$-functor 
\[  \tau\maps \underline{\V}_f^\op \to \T \]
that preserves these powers.  A ``model'' of a $\V$-theory is a $\V$-functor $\mu\maps\T \to \V$ that preserves powers by finite objects of $\V$.  There is a category $\Mod(\T,\V)$ whose objects are models and whose morphisms are $\V$-natural transformations. The monadic adjunction and equivalence of Section \ref{sec:lawvere} generalize to the enriched setting.

% However, this sort of $\V$-enriched Lawvere theory has arities for every finite object of $\V$.  These \textit{generalized arities} may be very powerful---rather than only inputting $n$-tuples of terms, we can input any finite object of terms.  Despite its potential, this generalization remains largely unexploited in computer science.   Power \cite{powsketch} introduced ``enriched sketches'' as a way of presenting enriched Lawvere theories, but to the authors' knowledge these are not yet widely understood or used.  What does it mean for an operation to take in a finite graph of terms?  How can we learn to use this generality? One clue that we note in Example \ref{ex:2} is that limits and colimits are operations whose arity is a ``diagram shape'' rather than a natural number. We hope that this idea is explored more widely, so that we can use more general arities in both mathematics and computer programming.

In this paper, however, we only consider \textit{natural number} arities, while still retaining enrichment. To do this we use the work of Lucyshyn-Wright \cite{lucyshyn-wright}, who along with Power \cite{np} has generalized Power's original ideas to allow a more flexible choice of arities.    We also limit ourselves to the case where the tensor product of $\V$ is cartesian.  This has a significant simplifying effect, yet it suffices for many cases of interest in computer science.

Thus, in all that follows, we let $(\V,\times,1_\V)$ be a cartesian closed category equipped with chosen finite coproducts of the terminal object $1_\V$, say 
\[   n_\V = \sum_{i \in n} 1_\V . \]  
\iffalse
These objects come with natural isomorphisms
\[
\V(n_\V,a) \cong\V(1_\V,a)^n .
\]
\fi
Define $\NN_\V$ to be the full subcategory of $\V$ containing just these objects $n_\V$.  
\iffalse
Since $\NN_\V$ has finite coproducts, $\NN_\V^\op$ has finite products.  
Let
\[ j \maps \NN_\V \hookrightarrow \V \]
be the inclusion.    
\fi
There is also a $\V$-category $\underline{\NN}_\V$ whose objects are those of $\NN_\V$ and whose hom-objects are given as in $\V$.   
\iffalse
\[         \NN_\V(a,b) = j(b)^{j(a)}  \]
for all $a,b \in \NN_\V$. 
\fi
We define the $\V$-category of \define{arities} for $\V$ to be 
\[             \A_\V := \underline{\NN}_\V^\op   .\]
We shall soon see that $\A_\V$ has finite $\V$-products. 

\begin{definition}
\label{defn:V-theory}
We define a \textbf{$\V$-theory} $(\T,\tau)$ to be a $\V$-category $\T$ equipped with a $\V$-functor 
\[ \tau  \maps \A_\V \to \T \]
that is the identity on objects and preserves finite $\V$-products.
\end{definition}

\begin{definition}
A \textbf{model} of $\T$ in a $\V$-category $\C$ is a $\V$-functor 
\[  \mu \maps \T \to \C \]
that preserves finite $\V$-products.
\end{definition}

Just as all the objects of a Lawvere theory are finite products of a single object, we shall see that all the objects of $\T$ are finite $\V$-products of the object 
\[   t = \tau(1_\V) .\]   

\begin{definition}
\label{defn:VLaw}
We define $\V\Law$, the \define{category of} $\V$\define{-theories}, to be the category for which an object is a $\V$-theory and a morphism from $(\T,\tau)$ to $(\T',\tau')$ is a $\V$-functor $f \maps \T \to \T'$ that preserves finite $\V$-products and has $f\tau = \tau'$.   
\end{definition}

\begin{definition}
\label{defn:VMod}
For every $\V$-theory $(T,\tau)$ and every $\V$-category $\C$ with finite $\V$-products, we define $\Mod(\T,\C)$, the \define{category of models} of $(\T,\tau)$ in $\C$, to be the category for which an object is a $\V$-functor $\mu \maps \T\to \C$ that preserves finite $\V$-products and a morphism is a $\V$-natural transformation.
\end{definition}

The basic monadicity results for Lawvere theories generalize to $\V$-theories when $\V$ is  complete and cocomplete, as in the main examples we consider: $\V = \sSet, \Cat, \Pos,$ and $\Set$.   Under this extra assumption $\V\Law$ and  $\Mod(\T,\C)$ can be promoted to 
$\V$-categories, which we call $\underline{\V\Law}$ and $\underline{\Mod}(\T,\C)$.  Furthermore, there is a $\V$-functor 
\[   U \maps \underline{\Mod}(\T,\V) \to \underline{\V} \]
sending any model  $\mu \maps \T \to \V$ to its underlying object $\mu(t) \in \V$.   
Recall that monads and adjunctions make sense in $\V\Cat$, just as they do in $\Cat$.  
The $\V$-functor $U$ has a left adjoint
\[   F \maps \underline{\V} \to \underline{\Mod}(\T,\V) ,\]
and $\underline{\Mod}(\T,\V)$ is equivalent to the $\V$-category of algebras of the resulting monad $T = UF$.  More precisely:

\begin{theorem}
\label{thm:monadicity}
Suppose $\V$ is cartesian closed, complete and cocomplete, and has chosen finite coproducts of the terminal object.  Let $(\T,\tau)$ be a $\V$-theory.  Then there is a monadic adjunction
\[\begin{tikzcd}[column sep=small]
\phantom{AA} \underline{\V} \arrow[bend left,below]{rr}{F}
& {\phantom{Aa} \ld} &
\arrow[bend left,above]{ll}{\;\, U} \underline{\Mod}(\T,\V).
\end{tikzcd}\]
\end{theorem}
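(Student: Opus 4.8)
The plan is to deduce this result from the general theory developed by Lucyshyn-Wright \cite{lucyshyn-wright}, of which our setup is the special case where the monoidal structure on $\V$ is cartesian and the chosen system of arities is $\A_\V = \underline{\NN}_\V^\op$. First I would verify that $\A_\V$, together with the inclusion into $\underline{\V}$, forms an \emph{eleutheric} system of arities in his sense: this amounts to checking that left Kan extensions along $\A_\V \hookrightarrow \underline{\V}$ exist and are computed pointwise, which holds because $\V$ is cocomplete and each $n_\V$ is a finite colimit (indeed coproduct) of copies of $1_\V$, so that the relevant Kan extensions reduce to colimits that $\V$ possesses. The key structural fact to record first is the one already flagged in the text: $\A_\V$ has finite $\V$-products, given by $m_\V \times_{\A_\V} n_\V = (m+n)_\V$, with the universal cone built from the coproduct injections in $\V$; dually to the observation that $\NN_\V$ has finite coproducts. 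This identifies a $\V$-theory in our sense with a ``$\A_\V$-theory'' in his sense, and a model with a ``$\A_\V$-model''.

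Next I would invoke the main monadicity theorem from \cite{lucyshyn-wright}: for an eleutheric system of arities $\A$ in a cocomplete symmetric monoidal closed $\V$, every $\A$-theory $\T$ yields a monadic adjunction between $\underline{\V}$ and $\underline{\Mod}(\T,\V)$, with the monad $T = UF$ being the $\A$-ary monad presented by $\T$. The free functor $F$ is constructed as a left Kan extension: $F(v)$ is the model whose value at $\tau(n_\V)$ is obtained from the coend formula generalizing $T(X) = \int^{n} X^n \times \T(n,1)$ displayed in Section~\ref{sec:lawvere}, now interpreted enriched over $\V$, and $U$ sends $\mu$ to $\mu(t)$. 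Completeness and cocompleteness of $\V$ guarantee that $\underline{\Mod}(\T,\V)$ is itself complete and cocomplete and that these coends exist, so the adjunction is well-defined; monadicity then follows from Beck's theorem in the enriched setting, using that $U$ creates those limits and colimits preserved by the forgetful functor, exactly as in the classical case with $\Set$ replaced by $\V$.

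The main obstacle is not any single hard estimate but rather the bookkeeping needed to confirm that our deliberately simplified framework genuinely lands inside the hypotheses of Lucyshyn-Wright's theorem --- in particular checking the eleutheric/``amenable'' condition on $\A_\V$ and matching his notion of ``$\T$ preserves $\A$-powers'' with our ``$\tau$ preserves finite $\V$-products.'' Here the cartesianness of $\V$ helps decisively: $\A_\V$-powers in the general theory are powers by the objects $n_\V$, and since $n_\V = \sum_{i\in n} 1_\V$, a power by $n_\V$ coincides with an $n$-fold $\V$-product, so ``preserving $n_\V$-powers'' is literally ``preserving $n$-ary $\V$-products'', reconciling the two definitions. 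Once this translation is in place, the theorem is a corollary; I would therefore present the proof as: (i) establish finite $\V$-products in $\A_\V$ and the power/product reconciliation; (ii) observe $\A_\V \hookrightarrow \underline{\V}$ is eleutheric using cocompleteness of $\V$; (iii) cite \cite{lucyshyn-wright} for the monadic adjunction, noting that completeness and cocompleteness of $\V$ are what promote everything to the enriched level and make the coend defining $F$ exist.
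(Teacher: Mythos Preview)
Your proposal is correct and follows essentially the same route as the paper: reduce to Lucyshyn-Wright's monadicity theorem by (i) establishing the power/product reconciliation (the paper packages this as Lemmas \ref{lem:powers_4} and \ref{lem:powers_5}), (ii) verifying the eleutheric condition (the paper simply cites his Example 7.5.5, which gives it whenever $\V$ is countably cocomplete, rather than arguing directly), and (iii) invoking his Theorem 8.9. One notational slip to fix: the system of arities is $j \colon \underline{\NN}_\V \hookrightarrow \underline{\V}$, not ``$\A_\V \hookrightarrow \underline{\V}$'' (recall $\A_\V = \underline{\NN}_\V^{\op}$), so the left Kan extensions in the eleutheric condition are along $j$, and a $\J$-theory in his sense is a functor out of $\J^{\op} = \A_\V$.
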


\begin{proof}
This follows from Lucyshyn-Wright's general theory \cite{lucyshyn-wright}, so our task is simply to explain how.   He allows $\V$ to be a symmetric monoidal category, and uses a more general concept of algebraic theory with a system of arities given by any fully faithful symmetric monoidal $\V$-functor $j \maps \J \to \underline{\V}$.   For us $\J = \underline{\NN}_\V$ and $j \maps \underline{\NN}_\V \to \underline{\V}$ is the obvious inclusion; this is his Example 3.7.

Lucyshyn-Wright defines a \textbf{$\J$-theory} to be a $\V$-functor $\tau \maps \J^\op \to \T$ that is the identity on objects and preserves powers by objects in $\J$ (or more precisely, their images under $j$).  For us $\J^\op = \A_\V$.   So, to apply his theory, we need to show that a $\V$-functor $\tau \maps \A_\V \to \T$ preserves powers by objects in $\NN_\V$ if and only if it preserves finite $\V$-products.  This is Lemma \ref{lem:powers_4} below.

He defines a model (or ``algebra'') of a $\J$-theory to be a $\V$-functor $\tau \maps \T \to \underline{\V}$ that preserves powers by objects in $\J$.   He defines a morphism of models to be a $\V$-natural transformation between such $\V$-functors.  So, to apply his theory, we also need to show that when $\J = \underline{\NN}_\V$, a $\V$-functor $\mu \maps \T \to \underline{\V}$ preserves powers by objects of $\J$ if and only if it preserves finite $\V$-products.   This is Lemma \ref{lem:powers_5} below.

A technical concept fundamental to Lucyshyn-Wright's theory is that of an \define{eleutheric} system of arities $j \maps \J \to \underline{\V}$.  This is one where the left Kan extension of any $\V$-functor $f \maps \J \to \underline{\V}$ along $j$ exists and is preserved by each $\V$-functor $\underline{\V}(x,-) \maps \underline{\V} \to \underline{\V}$.  In Example 7.5.5 he shows that $j \maps \underline{\NN}_\V \to \underline{\V}$ is eleutheric when $\V$ is countably cocomplete.  In Thm.\ 8.9 shows that when $j \maps \J \to \underline{\V}$ is eleutheric, and has equalizers, we may form the $\V$-category $\underline{\Mod}(\T,\V)$, and that the forgetful $\V$-functor
\[   U \maps \underline{\Mod}(\T,\V) \to \underline{\V} \]
is monadic.  This is the result we need.   So, our theorem actually holds whenever $\V$ is cartesian closed, with equalizers and countable colimits, and has chosen finite coproducts of the initial object.  \end{proof}

Before turning to examples, a word about Lucyshyn-Wright's construction of the left adjoint $F$ and the monad $T$ is in order.  These rely on the ``free model'' on an object $n_\V \in \V$.   This is the enriched generalization of the free model described in Section \ref{sec:lawvere}: it is the composite of $\tau^\op\maps \A_\V^\op \to \T^\op$ with the enriched Yoneda embedding $y\maps \T^\op \to [\T,\V]$:
\[
\begin{array}{rllll}
\A_\V^\op & \xrightarrow{\tau^\op} & \T^\op & \xrightarrow{y} & \left[\T,\V\right]\\
\\
n_\V & \mapsto & t^{n_\V} & \mapsto & \T(t^{n_\V},-)
\end{array}
\]
Since an object of $\V$ does not necessarily have a ``poset of finite subobjects'' over which to take a filtered colimit (as in $\Set$), the extension of this ``free model'' functor $y \tau^\op$ to all of $\V$ is specified by a somewhat higher-powered generalization: it is the left Kan extension of $y\tau^{\op}$ along $j$.
\[\begin{tikzcd}
\NN_\V \arrow[rr,"y\tau^\op"{name=y}] \arrow[swap,rd,"j"] & & \left[\T,\V\right]\\
& \V \arrow[swap,dotted, ru,"F:=\mathrm{Lan}_jy\tau^{\op}"] \arrow[Rightarrow,from=y,"\eta", shorten >=0.2cm,shorten <=.2cm]
\end{tikzcd}\]
This is the universal ``best solution'' to the problem of making the triangle commute up to a $\V$-natural transformation.  That is, for any functor $G \maps \V \to [\T,\V]$ and $\V$-natural transformation $\theta \maps y\tau^{\op} \Rightarrow Gj$, the latter factors uniquely through $\eta$.
From the adjunction between $\V$ and the category of models $\Mod(\T,\V)$ we obtain a $\V$-enriched monad
\[       T = U F \maps \V \to \V, \]
and this has a more concrete formula as an enriched coend:
\[
T(V) = \int^{n_\V\in \NN_\V} V^{n_\V} \times \T(t^{n_\V},t).
\]

We next give two examples of a rather abstract nature, where we show how $\Cat$-enriched Lawvere theories can describe categories with extra structure.   In Section \ref{sec:applications} we study examples more directly connected to operational semantics.

\begin{example}
\label{ex:1}
When $\V = \Cat$, a $\V$-category is a 2-category, so a $\V$-theory deserves to be called a \define{2-theory}.  For example, let $\T = \Th(\mathrm{PsMon})$ be the 2-theory of pseudomonoids.   A pseudomonoid \cite{pseudo} is a weakened version of a monoid: rather than associativity and unitality \textit{equations}, it has 2-isomorphisms called the associator and unitors, which we can treat as \textit{rewrite rules}.  To equate various possible rewrite sequences, these 2-isomorphisms must obey equations called ``coherence laws''. 
Power \cite{powsketch} has introduced ``enriched sketches'' as a way of presenting enriched Lawvere theories.   Informally, here is a presentation of the 2-theory for pseudomonoids:

	\[ \Th(\mathrm{PsMon}) \]
	\[\begin{array}{lll}
	\textbf{sort} & M & \text{pseudomonoid}\\
	 \textbf{operations}
	& m\maps M^2 \to M & \text{multiplication}\\
	 & e\maps1 \to M & \text{identity}\\
	\textbf{rewrites} & \alpha \colon m \circ (m \times \id_M) \stackrel{\sim}{\Longrightarrow} m \circ (\id_M \times m) & \text{associator}\\
	& \lambda\maps  m \circ (e \times \id_M) \stackrel{\sim}{\Longrightarrow} \id_M & \text{left unitor}\\
	& \rho\maps m \circ (\id_M \times e) \stackrel{\sim}{\Longrightarrow} \id_M & \text{right unitor}\\
	\textbf{equations}
          \end{array}\]

        \[\begin{tikzcd}
          M^4 \ar[rr,"1\times 1\times m"] \ar[dr,"1\times m\times 1" description, ""name=a1] \ar[dd, "m\times 1\times 1",swap,""name=b1] && M^3 \ar[dr, "1\times m",""name=a2] \arrow[Rightarrow,"1\times \alpha",from=a1,to=a2,shorten >=1.3cm,shorten <=1.3cm] &&& M^4 \ar[rr, "1\times 1\times m"] \ar[dd,"m\times 1\times 1",swap,""name=d1] && M^3 \ar[dd,"m\times 1" description,""name=d2] \ar[dr,"1\times m"] %\arrow[,"",from=d1,to=d2,shorten >=1.3cm,shorten <=1.3cm] 
          & \\
          & M^3 \ar[rr,"1\times m"] \ar[dd,"m\times 1",""name=b2] \arrow[Rightarrow,"\alpha\times 1",from=b1,to=b2,shorten >=0.7cm,shorten <=0.7cm] && M^2 \ar[dd,"m",""name=c1] & = & &&& M^2 \ar[dd,"m",""name=f1] \arrow[Rightarrow,"\alpha",from=d2,to=f1,shorten >=0.6cm,shorten <=0.6cm] \arrow[Rightarrow,"\alpha",from=b2,to=c1,shorten >=1.3cm,shorten <=1.3cm] \\
          M^3 \ar[dr,"m\times 1",swap] &&& && M^3 \ar[rr,"1\times m"] \ar[dr,"m\times 1",swap,,""name=e1] && M^2 \ar[dr,"m",,""name=e2] \arrow[Rightarrow,"\alpha",from=e1,to=e2,shorten >=1.3cm,shorten <=1.3cm] & \\
          & M^2 \ar[rr,"m",swap] && M && & M^2 \ar[rr,"m",swap] && M
        \end{tikzcd}\]

      \[\begin{tikzcd}
          & M^2 \ar[dl,"1\times e\times 1",""name=a1,swap] \ar[dr,"1",""name=a2] \arrow[Rightarrow,from=a1,to=a2,"1\times \lambda",swap,shorten >=0.4cm,shorten <=0.4cm] & && & M^2 \ar[dl,"1\times e\times 1",swap] \ar[dr,"1"] \ar[ddl,bend left =30,looseness=2, "1",""name=c1] & \\
          M^3 \ar[d,"m\times 1",swap,""name=b1] \ar[rr,"1\times m",swap] && M^2 \ar[d,"m",""name=b2] \arrow[Rightarrow,from=b1,to=b2,"\alpha",shorten >=1.3cm,shorten <=1.3cm,swap] & = & M^3 \ar[d,"m\times 1",swap] \arrow[Rightarrow,to=c1,"\rho\times 1",shorten >=0.4cm,shorten <=0.0cm] && M^2 \ar[d,"m"] \\
          M^2 \ar[rr,"m",swap] && M && M^2 \ar[rr,"m",swap] && M
        \end{tikzcd}\]
\noindent We write the equations as commutative diagrams merely for convenience; they could also be written as equations in a more traditional style.   The top diagram expresses the pentagon identity for the associator, while the bottom one expresses the usual coherence law involving the left and right unitors.
   
Models of $\T = \Th(\mathrm{PsMon})$ in $\Cat$ are monoidal categories: let us explore this example in more detail.  A model of $\T$ is a finite-product-preserving 2-functor $\mu\colon \T\to \Cat$, which sends 
\[\begin{array}{rccl}
	t & \mapsto& \C & \\
	m & \mapsto & \otimes \maps &  \C^2 \to \C \\
	e & \mapsto & I \maps &  1\to \C \\
	\alpha & \mapsto & a \maps & \otimes \circ (\otimes \times 1_\C)  \Rightarrow  \otimes \circ (1_\C \times \otimes)\\
	\lambda & \mapsto & \ell \maps &  I\circ 1_\C  \Rightarrow  1_\C\\
	\rho & \mapsto & r \maps & 1_\C \circ I  \Rightarrow  1_\C
\end{array}\]
such that the coherence laws of the rewrites are preserved.  Thus, a model is a category equipped with a tensor product $\otimes$ and unit object $I$ such that these operations are associative and unital up to natural isomorphism; so these models are precisely monoidal categories.

Given two models $\mu,\nu\maps \T\to \Cat$, a morphism of models is a 2-natural transformation $\varphi\maps \mu \Rightarrow \nu$; this amounts to a strict monoidal functor $\varphi\maps (\C,\otimes_C,I_C)\to(\D,\otimes_D,I_D)$.  The strictness arises because morphisms between models are 2-natural transformations rather than pseudonatural transformations. There is a substantial amount of theory on pseudomonads and pseudoalgebras \cite{bkp,dubuc}, but to the authors' knowledge the theory-monad correspondence has not yet been extended to include the case of weak naturality. 

Finally, because $\Cat$ is complete and cocomplete, the category of models $\Mod(\T,\Cat)$ can be promoted to a 2-category $\underline{\Mod}(\T,\Cat)$.  This is the 2-category of monoidal categories, strict monoidal functors, and monoidal natural transformations.

We can accomplish the same thing on the monad side: a $\Cat$-enriched monad is called a \define{2-monad}, and $\T$ gives rise to the ``free monoidal category'' 2-monad $T$ on $\Cat$ \cite{bkp}. To apply this 2-monad to $C \in \Cat$ we first form the free model on $\C$ by taking a left Kan extension as above, and then evaluate this model at the generating object.  In the same way that the (underlying set of the) free monoid on a set $X$ consists of all finite strings of elements of $X$, $T(\C)$ is the monoidal category consisting of all finite tensor products of objects of $\C$ and all morphisms built from those of $\C$ by composition and tensoring together with associators and unitors obeying the necessary coherence laws.   Morphisms of these algebras are strict monoidal functors, while 2-morphisms are natural transformation. We thus have a 2-equivalence between $\underline{\Mod}(\T,\Cat)$ and the 2-category of algebras of $T$.

%: the formula for left Kan extension (writing $n$ instead of $n_\Cat$ for simplicity) gives $F(\C)\maps \T\to \V$ by 
% \[   F(\C) = \int^{n\in \NN_\Cat} \T(t^{n},t^{(-)})\times \C^{n} \] 
% which is constructed by pairing $n$-ary morphisms in $\T$ with $n$-tuples of objects in $\C$ for all $n\in \NN_\Cat$, then quotienting the coproduct of these pairs by the equations of $\T$ and $\C$.

% Composing with the left adjoint, i.e., evaluating $F(\C)$ at $1$, gives the \textit{free monoidal category} on $\C$: in the same way that the (underlying set of the) free monoid on a set $X$ consists of all finite strings of elements of $X$, $F(\C)(1)$ consists of all finite tensors of objects and morphisms of $\C$, and all composites of these morphisms, up to the equations induced by the (composites and tensors of the) images of the associator and unitors.

% Given two models $\mu,\nu\maps \T\to \Cat$, a morphism of models $\varphi\maps \mu \Rightarrow \nu$ is a 2-natural transformation:
% \[\begin{tikzcd}
%     \mu(t^2) \ar[r,"\varphi_{t^2}"] \ar[d,""] & \nu(t^2) \ar[d,""] && C^2 \ar[r,"\varphi^2"] \ar[d,""] & D^2 \ar[d,""]\\
%     \mu(t) \ar[r,"\varphi_t"] & \nu(t) && C \ar[r,"\varphi"] & D
%   \end{tikzcd}\]

% Finally, an algebra of this monad 
% \[  T = UF \maps \C \mapsto \int^n \C^n \times \T(t^n,t) \]
% is a category $A$ equipped with a functor $\otimes_A\maps F(A)(1)\to A$ such that it is compatible with the multiplication and unit of the monad, which are the ``free'' tensor bifunctor and monoidal unit on $A$. Hence, $(A,\otimes_A,I_A)$ is precisely a monoidal category.

In this way, 2-theories generalize equipping \textit{set}-like objects  with operations obeying equations to equipping \textit{category}-like objects with operations obeying equations up to transformations that obey equations of their own. In particular, this gives us a way to present graphical calculi such as string diagrams -- the language of monoidal categories.
\end{example}

\begin{example}
\label{ex:2}
 Enrichment generalizes operations in more ways than by weakening equations to coherent isomorphisms.  We can also use 2-theories to describe other structures that make sense inside 2-categories, such as adjunctions. 
  
 For example, we may define a cartesian category $\X$ to be one equipped with right adjoints to the diagonal $\Delta_\X\maps \X \to \X \times \X$ and the unique functor $!_\X \maps \X \to 1_\Cat$.    These right adjoints are a functor $m \maps \X^2 \to \X$ describing binary products in $\X$ and a functor $e \maps 1 \to \X$ picking out the terminal object in $\X$.   We can capture the fact that they are right adjoints by providing them with units and counits and imposing the triangle equations.   There is thus a 2-theory $\Th(\mathsf{Cart})$ whose models in $\Cat$ are categories with chosen finite products.  More generally a model of this 2-theory in any 2-category $\C$ with finite products is called a \define{cartesian object} in $\C$.
 
  \[ \Th(\mathsf{Cart}) \]
  \[\begin{array}{lllllll}
      \textbf{type} &
        \X & \text{cartesian object}\\ \\
      \textbf{operations}  &
      m \maps \X^2 \to \X & \text{product} \\ 
        &  e \maps 1 \to \X & \text{terminal element} \\ \\
  \textbf{rewrites}  &
\bigtriangleup\maps \mathrm{id}_\X \Longrightarrow m \circ\, \Delta_\X & \text{unit of adjunction between $m$ and $\Delta_\X$} 
\\
&
\pi\maps \Delta_\X \circ m \Longrightarrow \mathrm{id}_{\X^2} & \text{counit of adjunction between $m$ and $\Delta_\X$} 
\\
& \top\maps \mathrm{id}_\X \Longrightarrow e \,\circ\, !_\X & \text{unit of adjunction between $e$ and $!_\X$} \\
& \epsilon \maps !_\X \, \circ \, e \Longrightarrow \mathrm{id}_{1} & \text{counit of adjunction between $e$ and $!_\X$} \\\\
 \textbf{equations}  
 \end{array}\]
\[\begin{tikzcd}
\Delta_\X \ar[Rightarrow,d,"\Delta_\X \circ \bigtriangleup",swap,""name=b1] \ar[Rightarrow,dr,"1"]
& &   
m \ar[Rightarrow,d,"\bigtriangleup \circ m",swap,""name=b1] \ar[Rightarrow,dr,"1"]  \\
         \Delta_X \circ m \circ \Delta_\X \ar[Rightarrow,r,"\pi \circ \Delta_\X",swap] & \Delta_\X & 
         m \circ \Delta_X \circ m \ar[Rightarrow,r,"m \circ \pi",swap] & m
        \end{tikzcd}\]
\[\begin{tikzcd}
!_\X \ar[Rightarrow,d,"!_\X \circ \top",swap,""name=b1] \ar[Rightarrow,dr,"1"]
& &   
e \ar[Rightarrow,d,"\top \circ e",swap,""name=b1] \ar[Rightarrow,dr,"1"]  \\
         !_X \circ e \, \circ \, !_\X \ar[Rightarrow,r,"\epsilon \circ !_\X",swap] & !_\X & 
         e \, \circ \, !_X \circ e \ar[Rightarrow,r,"e \circ \epsilon",swap] & e
        \end{tikzcd}\]
Again we write the equations as commutative diagrams, but this time commutative
triangles of 2-morphisms in $\Th(\mathsf{Cart})$.  These are the triangle equations that force $m$ to be the right adjoint of $\Delta_\X$ and $e$ to be the right adjoint of $!_\X$.  A model of $\Th(\mathsf{Cart})$ is a category with chosen binary products and a chosen terminal object; morphisms in $\Mod(\Th(\mathsf{Cart}),\Cat)$ are functors that strictly preserve this extra structure.

The subtle interplay between the cartesian structure of $\Th(\mathsf{Cart})$ and the cartesian structure of the object $\X \in \Th(\mathsf{Cart})$ is an example of the ``microcosm principle'': objects with a given structure are most generally defined in a context that has the same sort of structure.   As seen in the previous example, we can also define pseudomonoids in any 2-category with finite products, but this is excess to requirements: one can in fact define them more generally in any monoidal 2-category \cite{pseudo}.

In fact, if we let arities be finite categories, we would have $\Cat$-theories of categories with finite limits and colimits.   However, for the purposes of this paper we are using only natural number arities.  This suffices for constructing $\Th(\mathsf{Cart})$ and also $\Th(\mathsf{CoCart})$, the theory of categories with chosen binary coproducts and a chosen initial object.   Various other kinds of categories---distributive categories, rig categories, etc.---can also be expressed using $\Cat$-theories with natural number arities. This gives a systematic formalization of these categories, internalizes them to new contexts, and allows for the generation of 2-monads that describe them.
\end{example}

\section{Change of Base}
\label{sec:base_change}

We now have the tools to formulate the main idea: a choice of enrichment for Lawvere theories corresponds to a choice of \textit{computation}, and changing enrichments corresponds to a \textit{change of computation}. We propose a general framework in which one can translate between different forms of computation: small-step, big-step, full-step operational semantics, and denotational semantics.

\subsection{General results}
\label{ssec:base_change_results}

Suppose that $\V$ and $\W$ are enriching categories of the sort we are considering: 
cartesian closed categories equipped with chosen finite coproducts of the terminal object.
Suppose $F \maps \V \to W$ preserves finite products.   This induces a \textbf{change of base} functor $F_*\maps\V\Cat \to \W\Cat$ \cite{borceux} which takes any $\V$-category $\C$ and produces a $\W$-category $F_*(\C)$ with the same objects but with
\[      F_*(\C)(a,b) := F(\C(a,b)) \]
for all objects $a,b$.   Composition in $F_*(\C)$ is defined by
\[       F(\C(b,c)) \times F(\C(a,b)) \stackrel{\sim}{\longrightarrow} F(\C(b,c) \times \C(a,b)) 
\xrightarrow{F(\circ_{a,b,c})} F(C(a,b)) . \]  
The identity-assigning morphisms are given by
\[          1_\W \stackrel{\sim}{\longrightarrow} F(1_\V) \xrightarrow{F(id_a)} F(\C(a,a)) .\]

Moreover, if $f\maps \C \to \D \in \V\Cat$ is a $\V$-functor, there is a $\W$-functor $F_*(f) \maps F_*(C) \to F_*(D)$ that on objects equals $f$ and on hom-objects equals $F(f)$. If $\alpha\maps f \Rightarrow g$ is a $\V$-natural transformation and $c\in \Obj(\C)$, then we define
\[F_*(\alpha)_c\maps 1_\W \stackrel{\sim}{\longrightarrow} F(1_\V) \xrightarrow{F(\alpha_c)} F(\D(f(c),g(c))).\]
Thus, change of base actually gives a 2-functor from the 2-category of $\V$-categories, 
$\V$-functors and $\V$-natural transformations to the corresponding 2-category for $\W$.

In fact, the change of base operation gives a 2-functor
\[\begin{array}{ccc}
\Cart\Cat & \xrightarrow{(-)_*} & 2\Cat\\
(F\maps \V\to\W) & \mapsto & (F_*\maps \V\Cat\to\W\Cat)
\end{array}\]
where $\Cart\Cat$ is the 2-category of 
cartesian closed categories equipped with chosen finite coproducts of the terminal object, finite product preserving functors preserving these chosen finite coproducts, and natural transformations.  In particular, if $\V$ has not just finite coproducts of the terminal object but all coproducts of this object, there is a map of adjunctions
\[\begin{tikzcd}
	\Set \arrow[bend left,below]{rr}{-\cdot 1}
	& \ld &
	\arrow[bend left,above]{ll}{\V(1,-)} \V
	\arrow[maps to]{r}
	& \Cat \arrow[bend left,below]{rr}{(-\cdot 1)_*}
	& \ld &
	\arrow[bend left,above]{ll}{(\V(1,-))_*} \V\Cat.
\end{tikzcd}\]
Each set $X$ is mapped to the $X$-indexed coproduct of the terminal object in $\V$ and conversely each object $v$ of $\V$ is represented in $\Set$ by the hom-set from the unit to $v$. The latter induces the ``underlying category'' change of base, which forgets the enrichment. The former induces the ``free $\V$-enrichment'' change of base, whereby ordinary $\Set$-categories are converted to $\V$-categories, denoted $\C \mapsto \underline{\C}$. These form an adjunction, because 2-functors preserve adjunctions.

We now study how change of base affects theories and their models.  We start by asking when a functor $F \maps \V \to \W$ induces a change of base $F_*\maps\V\Cat \to \W\Cat$ that ``preserves enriched theories''.   That is, given a $\V$-theory 
\[      \tau \maps  \A_\V \to \T \]
we want to determine conditions for the base-changed functor 
\[    F_*(\tau) \maps  F_*(\A_\V) \to  F_*(\T) \]
to induce a $\W$-theory in a canonical way.   Recall that we require $\V$ and $\W$ to be
cartesian closed, equipped with chosen finite coproducts of their terminal objects.   We thus expect the following conditions to be sufficient: $F$ should be cartesian, and it should
preserve the chosen finite coproducts of the terminal object:
\[      F(n_\V) = n_\W  \]
for all $n$.   

Given these conditions there is a $\W$-functor, in fact an isomorphism
\[    \tilde{F} \maps \A_\W \to F_*(\A_\V)  . \]
On objects this maps $n_\W$ to $n_\V$, and on hom-objects it is simply the identity from
\[     \A_\W(m_\W, n_\W) = n_\W^{m_\W} = (n^m)_\W \]
to 
\[    F(\A_\V(m_\V, n_\V)) = F(n_\V^{m_\V}) = F((n^m)_\V) = (n^m)_\W \]
where we use Lemma \ref{lem:NN} in these computations.

Using this we obtain a composite $\W$-functor
\[   \A_\W \stackrel{\tilde{F}}{\longrightarrow} 
  F_*(\A_\V) \xrightarrow{F_*(\tau_\V)} F_*(\T). \]
This is the identity on objects and preserves finite $\V$-products because each of the factors
has these properties.   It is thus a $\W$-theory.   

\begin{theorem}
\label{thm:change-of-base}
Let $\V$, $\W$ be cartesian closed categories with chosen finite coproducts of their terminal objects, and let $F\maps \V \to \W$ be a cartesian functor that preserves these chosen coproducts.  Then $F_*$ \emph{preserves enriched theories}: that is, for every $\V$-theory $\tau_\V\maps \A_\V\to \T$, the $\W$-functor 
\[ \tau_\W := F_*(\tau_\V) \circ \tilde{F} \maps \A_\W \to F_*(\T)\]
is a $\W$-theory.   Moreover, $F_*$ \emph{preserves models}: for every model $\mu \maps\T \to \C$ of $(\T,\tau_\V) $, the $\W$-functor $F_*(\mu) \maps F_*(\T) \to F_*(\C)$ is a model of 
$(F_*(\T), \tau_\W)$.
\end{theorem}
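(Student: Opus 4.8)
The proof has two halves, but both rest on one auxiliary fact about change of base, so the plan is to isolate that first: \emph{if $F\maps\V\to\W$ is cartesian, then $F_*$ sends $\V$-categories with finite $\V$-products to $\W$-categories with finite $\W$-products, and sends $\V$-functors preserving finite $\V$-products to $\W$-functors preserving finite $\W$-products.} I would prove this using the ``universal cone'' repackaging of Section~\ref{sec:enrichment}. Recall that a $\V$-product $b=\prod_i b_i$ in a $\V$-category $\C$ is witnessed by an element $p\maps 1_\V\to\prod_i\C(b,b_i)$ for which the $\V$-natural transformation $\C(-,b)\to\prod_i\C(-,b_i)$ built in Eq.~\eqref{eq:projection} is an isomorphism. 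Since $F$ is cartesian one can form the composite $1_\W\xrightarrow{\sim}F(1_\V)\xrightarrow{F(p)}F(\prod_i\C(b,b_i))\xrightarrow{\sim}\prod_i F(\C(b,b_i))=\prod_i F_*(\C)(b,b_i)$, call it $F_*(p)$, and the task is to check that the $\W$-natural transformation it induces via Eq.~\eqref{eq:projection} is obtained from the original $\V$-natural isomorphism by applying $F$ hom-wise and conjugating by the isomorphisms that witness $F$'s preservation of products. Here one uses that composition in $F_*(\C)$ is, by construction, $F$ applied to composition in $\C$ pre-composed with such an isomorphism. Because $F$ carries isomorphisms to isomorphisms, $F_*(p)$ is then a universal cone, so $F_*(\C)$ has the $\W$-product $b=\prod_i b_i$; the statement for functors falls out of the same computation, since a finite-$\V$-product-preserving $\V$-functor $G$ sends each $p$ to a universal cone, hence $F_*(G)$ sends each $F_*(p)$ to one.

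Granting this, the first assertion is almost entirely the discussion preceding the theorem, and what remains is to pin down the two claims made there. First, $\A_\V$ has finite $\V$-products, so $F_*(\A_\V)$ has finite $\W$-products by the auxiliary fact. Second, $\tilde F\maps\A_\W\to F_*(\A_\V)$ is a well-defined $\W$-functor and an isomorphism: on objects it is the bijection $n_\W\mapsto n_\V$, and on hom-objects the displayed identifications $\A_\W(m_\W,n_\W)=(n^m)_\W=F\big((n^m)_\V\big)=F(\A_\V(m_\V,n_\V))$, which use Lemma~\ref{lem:NN} together with $F(n_\V)=n_\W$, exhibit it as an identity map; that it respects composition and units is a routine consequence of $F$ being cartesian, and being bijective on objects and an identity on hom-objects it is invertible. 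An isomorphism of $\W$-categories preserves finite $\W$-products, and $F_*(\tau_\V)$ preserves finite $\W$-products by the auxiliary fact applied to the finite-$\V$-product-preserving $\tau_\V$; hence $\tau_\W=F_*(\tau_\V)\circ\tilde F$ preserves finite $\W$-products, and it is the identity on objects because each factor is. Thus $\tau_\W$ is a $\W$-theory.

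The second assertion is then immediate: if $\mu\maps\T\to\C$ is a model of $(\T,\tau_\V)$, then $\C$ has finite $\V$-products and $\mu$ preserves them, so by the auxiliary fact $F_*(\C)$ has finite $\W$-products and $F_*(\mu)\maps F_*(\T)\to F_*(\C)$ preserves them; by Definition~\ref{defn:VMod} this is exactly what it means for $F_*(\mu)$ to be a model of $(F_*(\T),\tau_\W)$.

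I expect the auxiliary fact to be the only real obstacle, and within it the diagram chase that $F_*(p)$ induces an isomorphism. The delicacy is that Eq.~\eqref{eq:projection} interleaves the $\V$-natural-transformation structure, the chosen product projections of $\V$, and composition in $\C$, and each of these is preserved by $F$ only up to the coherence isomorphisms expressing that $F$ is cartesian; the content is that these isomorphisms cancel coherently, so that the resulting $\W$-natural transformation really is $F$ applied hom-wise to an isomorphism rather than merely something equivalent to one. Everything else --- the construction and invertibility of $\tilde F$, the identity-on-objects bookkeeping, and the whole of the second assertion --- is routine once this is in hand.
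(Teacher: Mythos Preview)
Your proposal is correct, and the first half matches the paper's own pre-theorem discussion closely: the paper also argues that $\tilde F$ is an identity-on-objects isomorphism and that the composite $F_*(\tau_\V)\circ\tilde F$ is identity on objects and preserves finite $\W$-products ``because each of the factors has these properties.'' Where you differ is in the second half. The paper does not prove your auxiliary fact in the generality you state it; instead, for the preservation-of-models claim it invokes Lemma~\ref{lem:powers_5} to trade finite $\V$-products for $\NN_\V$-powers, and then asks the reader to verify that $F_*(\mu)$ preserves $\NN_\W$-powers. Your route is more uniform---one lemma handles both theories and models---at the cost of the coherence diagram chase you flag, whereas the paper's route avoids re-proving anything about general finite $\V$-products by reducing to the special case of powers by objects $n_\V$, where the universal cone lives in $\C(c^{n_\V},c)^{n_\V}$ and the effect of $F$ is governed directly by $F(n_\V)=n_\W$. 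Either way the residual verification is of the same character, and neither the paper nor you carries it out in full.
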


\begin{proof}
We have shown the first part.  For the second, by Lemma \ref{lem:powers_5} it suffices to assume that $\mu$ preserves finite $\NN_V$-powers and check that $F_*(\mu)$ preserves $\NN_\W$-powers.  We leave this as an exercise to the reader.
\end{proof}

Hence, any cartesian functor that preserves chosen finite coproducts of the terminal object gives a change of base.  It thus provides for a method of translating formal languages between various ``modes of operation''.   Moreover, this reasoning generalizes to \textit{multisorted} $\V$-theories, enriched theories which have multiple sorts: given any $n\in \mathbb{N}$, the monoidal subcategory $(\NN_\V)^n$ is also an eleutheric system of arities, so Lucyshyn-Wright's monadicity theorem still applies.   

\subsection{Examples}
\label{sec:base_change_examples}

Now let us look at some examples.  The most important changes of base are the left adjoints in this diagram from Sec.\ \ref{sec:intro}:
\[ \label{eq:diagram}  \begin{tikzcd}[column sep=small]
\sSet  \arrow[bend left,below]{rr}{\FC}
& \ld &
\arrow[bend left,above]{ll}{\US} 
\Cat \arrow[bend left,below]{rr}{\FP}
& \ld &
\arrow[bend left,above]{ll}{\UC} \Pos \arrow[bend left,below]{rr}{\FS}
& \ld &
\Set \arrow[bend left,above]{ll}{\UP}
\end{tikzcd}\]
The first step describes the translation from small-step to big-step operational semantics. As already mentioned, we need to use simplicial sets rather than graphs; let us now say more about why.

A first attempt might use directed multigraphs.  Such graphs have directed edges and allow multiple edges between any pair of vertices.    The category $\Gph$ of directed multigraphs is $\Set^\G$ where $\G$ is the category with two objects $v$ and $e$ and two morphisms $s,t \maps e \to v$.      The ``free category'' functor $\F \maps \Gph \to \Cat$ gives for every graph $G$ a category $\F (G)$ as follows:
\[\begin{array}{rl}
\text{objects} & \text{vertices of } G\\
\text{morphisms} & (e_1,e_2,...,e_n)\maps s(e_1)\to t(e_n) \;\; \; : \;\;\; \;\; \forall i<n \;\; t(e_i)=s(e_{i+1})\\
\text{composition} & (e_1,e_2,...,e_m) \circ (e'_1,e'_2,...,e'_n) = (e'_1,...,e'_n,e_1,...,e_m) \; : \;\; t(e_n')=s(e_1).\\
  \end{array}\]
The morphisms in $\F(G)$ are called \define{edge paths}.  Just as an edge describes a single rewrite in small-step operational sematics, an edge path describes a sequence of rewrites in big-step operational semantics.  The edge paths with $n = 0$ serve as identity morphisms.  

Unfortunately, $\F \maps \Gph \to \Cat$ does not preserve products, so it is not a valid base change.  To see this, let $G_1$ be $\{0\xrightarrow{e} 1\}$, the graph with two vertices and one edge.  The product $G_1\times G_1$ looks like this:
\[\begin{tikzcd}
    (0,0) \ar[dr,"(e{,}e)" description] & (0,1) \\
    (1,0) & (1,1).
  \end{tikzcd}\]
Thus the free category $\F(G_1 \times G_1)$ has just one non-identity morphism.    On the other hand $\F(G_1)\times \F(G_1)$ has five non-identity morphisms, shown here:
\[\begin{tikzcd}
    (0,0) \arrow[r,"(\mathrm{id}_0{,}e)"] \ar[d,"(e{,}\mathrm{id}_0)",swap] \ar[dr,"(e{,}e)" description] & (0,1) \ar[d,"(e{,}\mathrm{id}_1)"]\\
    (1,0) \ar[r,"(\mathrm{id}_1{,}e)",swap] & (1,1).
  \end{tikzcd}\]
where we write $\mathrm{id}$ for identity morphisms and $e$ for the edge path consisting of the single edge $e$.   Note that the triangles in this diagram commute.  In terms of rewriting, the category $\F(G_1 \times G_1)$ only allows the rewrite $e \maps 0 \to 1$ to occur simultaneously in both factors, while $\F(G_1)\times \F(G_1)$ allows it to occur independently in either factor, in a commuting way.

To solve this problem one, might try to use reflexive graphs.    Such graphs have directed edges and allows multiple edges between any pair of vertices; further, each vertex $v$ is equipped with a distinguished \define{identity edge} $i(v)$ from $v$ to itself. The category $\RGph$ of reflexive graphs is $\Set^\R$, where $\R$ is the category with two objects $v$ and $e$, two morphisms $s,t \maps e \to v$, and a morphism $i \maps v \to e$ obeying $si = ti = 1_v$.   There is a free category functor $\F' \maps \RGph \to \Cat$, which is like the free category functor for $\Gph$ except that we identify an edge path $(e_1, \dots, e_n)$ with the same path having $e_i$ omitted when $e_i$ is an identity edge.  Thus, the identity edges of a reflexive graph $R$ become identity morphisms in $\F'(R)$.   

The advantage of reflexive graphs is that they allow rewrites in a product to occur independently in either factor.   For example, let $R_1$ be the reflexive graph with two vertices and one non-identity edge, $\{0\xrightarrow{e} 1\}$ (where we do not draw identity edges).  The product $R_1 \times R_1$ has five non-identity edges:
\[\begin{tikzcd}
    (0,0) \arrow[r,"(i(0){,}e)"] \ar[d,"(e{,}i(0))",swap] \ar[dr,"(e{,}e)" description] & (0,1) \ar[d,"(e{,}i(1))"]\\
    (1,0) \ar[r,"(i(1){,}e)",swap] & (1,1).
  \end{tikzcd}\]
Thus, the free category $\F'(R_1 \times R_1)$ has two \emph{noncommuting} triangles.  On the other hand, $\F'(R_1) \times \F'(R_1)$ is the product of the category with a single non-identity morphism $e \maps 0 \to 1$ with itself, so it is this category:
\[\begin{tikzcd}
    (0,0) \arrow[r,"(\mathrm{id}_0{,}e)"] \ar[d,"(e{,}\mathrm{id}_0)",swap] \ar[dr,"(e{,}e)" description] & (0,1) \ar[d,"(e{,}\mathrm{id}_1)"]\\
    (1,0) \ar[r,"(\mathrm{id}_1{,}e)",swap] & (1,1)
  \end{tikzcd}\]
 with two commuting triangles.  Thus $\F' \maps \RGph \to \Cat$ again fails to preserve products, though in some sense it comes closer.  Simply put, while $\F'(R_1 \times R_1)$ allows rewrites to be done independently in either factor, these rewrites fail to commute.
 
To solve this problem we shall consider $\RGph$ as a full subcategory of the category of simplicial sets, $\sSet$.   To do this, we treat a reflexive graph as a simplicial set  with only degenerate simplices for $n > 1$.  There is a left adjoint $\FC \maps \sSet \to \Cat$, usually called \define{realization}, and this functor preserves products \cite[Prop.\ B.0.15]{joyal}.  For example, if we treat $R_1$ above as a simplicial set and take the product $R_1 \times R_1$ in $\sSet$, this contains triangles that force the triangles in
$\FC(R_1 \times R_1)$ to commute.    Thus, realization provides a useful base change to translate from small-step operational semantics to big-step operational semantics.

The other functors in our chain of left adjoints are simpler.  The ``free poset'' functor $\FP\maps \Cat \to \Pos$ maps any category $C$ to the poset whose elements are objects of $C$, with $c \le c'$ iff $C$ contains a morphism from $c$ to $c'$.  This is a valid change of base---i.e., it preserves finite products---because the product of posets is defined in the same way as the product of categories.    If we apply this change of base to a model of a $\Cat$-enriched theory, we obtain a model of a $\Pos$-enriched theory that says for any pair of terms the presence or absence of a rewrite sequence from one to the other, without distinguishing between different sequences.  We call this \define{full-step operational semantics}.

Finally, we can pass to the purely abstract realm where all computation is already complete.  For this we take the left adjoint $\FS\maps \Pos \to \Set$ to the functor $\UP \maps \Set \to \Pos$ sending any set to the discrete poset on that set.  The functor $\FS$ collapses each connected component of the poset to a point; this too preserves finite products.  If we apply this change of base to a model of a $\Pos$-enriched theory, we obtain a model of a $\Set$-enriched theory that extracts its denotational semantics by identifying all terms related by rewrites.  If the rewrites are terminating and confluent, we can choose a representative term for each equivalence class: the unique term that admits no nontrivial rewrites.

\section{The Category of All Models}
\label{sec:V-theories}

In addition to base change, there are two other natural and useful ways to go between models of enriched theories.   Suppose $\V$ is any cartesian closed category with chosen finite coproducts of the terminal object.   Let $\V\Mod(\T,\C)$ be the category of models of a $\V$-theory $\T$ in a $\V$-category $\C$ with finite $\V$-products, as in Defn.\ \ref{defn:VMod}.  A morphism of $\V$-theories $f\maps\T\to \T'$ induces a \define{change of theory} functor between the respective categories of models 
\[  f^*\maps\V\Mod(\T',\C)\to \V\Mod(\T,\C) \]
defined as pre-composition with $f$.  Similarly, a $\V$-product-preserving $\V$-functor $g\maps \C \to \C'$ induces a \define{change of context} functor 
\[  g_*\maps \V\Mod(\T,\C) \to \V\Mod(\T,\C') \]
defined as post-composition with $g$.   

These translations, as well as change of base, can all be packed up nicely using the \textit{Grothendieck construction}: given any functor $F\maps \D \to \Cat$, there is a category $\int F$ that encapsulates all of the categories in the image of $F$, defined
as follows:
\[\begin{array}{rl}
\text{objects} & (d,x) \colon d\in \D, \; x\in F(d)\\
\text{morphisms} & (f\maps d\to d',a\maps F(f)(x)\to x')\\
\text{composition} & (f,a) \circ (f',a') = (f \circ f', a \circ F(f)(a')).
\end{array}\]
Moreover there is a functor $p_F \maps \int F \to \D$ given as follows:
\[\begin{array}{rl}
\text{on objects} & p_F \maps (d,x) \mapsto d \\
\text{on morphisms} & p_F \maps (f,a) \mapsto f .\\
\end{array}\]
For more details see \cite{borceux,jacobs}. We noted in Section \ref{sec:enriched_lawvere} that $\V\Law$ and $\Mod(\T,\C)$ can be promoted to $\V$-categories when $\V$ is complete and cocomplete: this and further conditions imply that we can use the enriched Grothendieck construction \cite{beardsleywong}, but we focus on the ordinary Grothendieck
construction for simplicity.   

First, this construction lets us bring together all models of all different $\V$-theories in all different contexts into one category.   All the $\V$-theories
are objects of $\V\Law$, as in Defn. \ref{defn:VLaw}.   We can also create a category of all  ``$\V$-contexts''.

\begin{definition} 
\label{defn:VCon}
Let $\V\Con$, the \textbf{category of $\V$-contexts} be the category for which an
object is a $\V$-category with finite $\V$-products and  a morphism is a functor that
preserves finite $\V$-products.   
\end{definition}

There is a functor 
\[    \V\Mod\maps \V\Law^\op \times \V\Con \to \Cat \]
that sends any object $(\T,\C)$ to $\V\Mod(\T,\C)$ and any morphism $(f,g)$ to
$f^* g_* = g_* f^*$.   The functoriality of $\V\Mod$ summarizes the 
contravariant change-of-theory and the covariant change-of-context above.
Applying the Grothedieck construction we obtain a category $\int \V\Mod$. 
Technically an object of $\int \V\Mod$ is a triple $(T,\C,\mu)$, but more intuitively 
it is a model $\mu \maps \T \to \C$ of any $\V$-theory $\T$ in any $\V$-context $\C$.   Similarly, a morphism 
\[   (f,g,\alpha)\maps (\T,\C,\mu) \to (T',\C',\mu') \]
in $\V\Mod$ consists of:
\begin{itemize}
\item
a morphism of $\V$-theories $f \maps \T' \to \T$,
\item
a $\V$-functor $g \maps\C\to \C'$ that preserves finite $\V$-products, and
\item 
a $\V$-natural transformation $\alpha\maps g \circ \mu \circ f \Rightarrow \mu'$.
\end{itemize}
This is a natural way to map between different models of different theories in different contexts.

We can go further by creating a category that even contains all choices of enriching 
categories $\V$:

\begin{definition}
\label{defn:enrichment}
Let $\Enr$ be the category for which an object is a cartesian closed category $\V$ with chosen finite coproducts of the terminal object, and a morphism is a cartesian
 functor $F \maps \V \to \W$ preserving the chosen finite coproducts of 
the initial object.   
\end{definition}

There is a functor 
\[   \mathrm{Mod} \maps \Enr \to \Cat \]
that maps any object $\V$ to $\int \V\Mod$ and any morphism $F \maps \V \to \W$ to a functor 
\[    \mathrm{Mod}(F) \maps \textstyle{\int \V\Mod \to \int \W\Mod} \]
that has the following effect:
\begin{itemize}
\item  $\mathrm{Mod}(F)$ maps any object $(\T,\C,\mu)$ to
the object $(F_*(\T),F_*(\C),F_*(\mu))$. 
\item $\mathrm{Mod}(F)$ maps any morphism $ (f,g,\alpha)$
to the morphism $(F_*(f),F_*(g),F_*(\alpha))$. 
\end{itemize}
Thus, we can use the Grothendieck construction once more to pack up all choices of 
enrichment into one big category:

\begin{theorem}
There is a category $\int \mathrm{Mod}$ in which:
\begin{itemize}
\item An object is a choice of cartesian closed category $\V$ with chosen finite coproducts
of the terminal object, a $\V$-theory $\T$, a  $\V$-category $\C$ with finite $\V$-products, and a model $\mu \maps \T \to \C$.
\item A morphism is a cartesian functor $F \maps \V \to \W$ preserving the chosen finite
coproducts of the terminal object and a morphism $(f,g,\alpha) \maps (F_*(\T), F_*(\C), F_*(\mu)) \to (\T,\C,\mu)$ in $\W\Mod$.
\end{itemize}
\end{theorem}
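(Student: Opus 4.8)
The plan is to apply the Grothendieck construction directly to the functor $\mathrm{Mod} \maps \Enr \to \Cat$ defined just above the statement, so the real work is checking that $\mathrm{Mod}$ is genuinely functorial; once that is in hand, $\int \mathrm{Mod}$ exists by the general construction recalled in Section \ref{sec:V-theories}, and its objects and morphisms are precisely as described by unwinding the definition of $\int(-)$ applied to $\mathrm{Mod}$. So I would first recall that on objects $\mathrm{Mod}(\V) = \int \V\Mod$, whose objects are triples $(\T,\C,\mu)$ with $\T$ a $\V$-theory, $\C$ a $\V$-context, and $\mu \maps \T \to \C$ a model; combined with the outer choice of $\V$ this gives exactly the stated description of objects of $\int \mathrm{Mod}$. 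For morphisms, an arrow in $\int \mathrm{Mod}$ from $(\V,\T,\C,\mu)$ to $(\W,\T',\C',\mu')$ is by definition a pair consisting of a morphism $F \maps \V \to \W$ in $\Enr$ together with a morphism in the fiber $\int \W\Mod$ from $\mathrm{Mod}(F)(\V,\T,\C,\mu) = (F_*\T, F_*\C, F_*\mu)$ to $(\T',\C',\mu')$; spelling out the latter using the description of morphisms in $\int\W\Mod$ gives the triple $(f,g,\alpha)$ as stated. (I would note the minor variance bookkeeping: because $\V\Mod$ is contravariant in the theory variable, a morphism of the bundled object runs $(f,g,\alpha)\maps(F_*\T,F_*\C,F_*\mu)\to(\T',\C',\mu')$, matching the statement once one is careful about which direction $f$ points.)

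The substantive step is therefore verifying that $\mathrm{Mod} \maps \Enr \to \Cat$ preserves identities and composition. Preservation of identities is immediate since $(\id_\V)_* = \id_{\V\Cat}$ acts as the identity on theories, contexts, and models. For composition, given $F \maps \V \to \W$ and $G \maps \W \to \X$ in $\Enr$, I must check $\mathrm{Mod}(G \circ F) = \mathrm{Mod}(G) \circ \mathrm{Mod}(F)$ as functors $\int\V\Mod \to \int\X\Mod$. On objects this reduces to $(G\circ F)_*(\T) = G_*(F_*(\T))$ and similarly for contexts and models, which follows from the fact, recalled in Section \ref{sec:base_change}, that change of base is itself 2-functorial: $(-)_* \maps \Cart\Cat \to 2\Cat$ sends $G \circ F$ to $G_* \circ F_*$. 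The only genuinely new wrinkle is that $\mathrm{Mod}(F)$ must actually land in $\int \W\Mod$ — i.e.\ that $F_*(\T)$ really is a $\W$-theory, $F_*(\C)$ a $\W$-context, $F_*(\mu)$ a model, and that $F_*$ carries morphisms $(f,g,\alpha)$ of bundled objects to morphisms of bundled objects. But this is exactly the content of Theorem \ref{thm:change-of-base} (plus the observation that $F_*$ of a finite-$\V$-product-preserving $\V$-functor preserves finite $\W$-products, so $\V$-contexts go to $\W$-contexts), together with the fact that $F_*$ is a 2-functor on the underlying $2$-categories so it respects the composition and the whiskering that define composition in the Grothendieck category.

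I expect the main obstacle to be purely organizational rather than mathematical: keeping the three levels of Grothendieck construction straight (models inside $\int\V\Mod$ inside $\int\mathrm{Mod}$) and tracking the three independent variances — contravariant in the theory, covariant in the context, and covariant in the enrichment $\V$ — so that the composition formula $(f,g)\mapsto f^*g_*$ inside a fixed fiber meshes correctly with the change-of-base action $F_*$ across fibers. Concretely, the one computation worth doing carefully is the compatibility $\mathrm{Mod}(F)$ of a composite morphism $(f,g,\alpha)\circ(f',g',\alpha')$ with the composite of images, which amounts to $F_*$ commuting with horizontal composition of $\V$-natural transformations and with whiskering; this is immediate from $F_*$ being a $2$-functor but should be stated. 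Everything else is formal, so the proof is short: assemble the functor $\mathrm{Mod}$ (already done in the text), invoke functoriality via the $2$-functoriality of $(-)_*$ and Theorem \ref{thm:change-of-base}, and then apply the Grothendieck construction, reading off the description of objects and morphisms of $\int\mathrm{Mod}$ from its general definition.
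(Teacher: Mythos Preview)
Your proposal is correct and follows the same approach as the paper: define the functor $\mathrm{Mod}\maps\Enr\to\Cat$, apply the Grothendieck construction, and read off the description of objects and morphisms. In fact the paper gives no explicit proof at all---the theorem is stated immediately after the definition of $\mathrm{Mod}$ with the preceding discussion serving as justification---so your outline, which takes care to verify functoriality of $\mathrm{Mod}$ via the 2-functoriality of $(-)_*$ and Theorem~\ref{thm:change-of-base}, is more detailed than what the paper provides.
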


This category allows us to formally treat morphisms between objects of ``different kinds'', something we often use informally, for example when speaking of a map from a set to a ring, or a group to a topological group.   There are many unexplored questions about the large, heterogeneous categories which arise from the Grothendieck construction, regarding what unusual structure may be gained, such as limits and colimits with objects of different types, or identifying ``processes'' in which the kinds of objects change in an essential way. However, for our purposes we need only recognize that enriched Lawvere theories can be assimilated into one category, providing a single place in which to study change of base, change of theory, and change of context.

\section{Applications}
\label{sec:applications}

In computer science literature, enriched algebraic theories have primarily been studied in the context of ``computational effects'' \cite{compeffects}.   Stay and Meredith have proposed that enriched Lawvere theories can be utilized for the design of programming languages \cite{ladl}. To circumvent the question of \textit{variable binding}, there is another approach which instead uses an enriched theory as a ``compiler'' which translates a language with binding to one without. This idea comes from the subject of combinatory logic. 

%The primary difference between programming languages and enriched theories, however, is that the latter do not have a notion of \textit{variable binding}---there has certainly been progress on this idea \cite{fiore}, but it leads us beyond our present focus. There is another approach which instead use an enriched theory as a ``compiler'', which can translate a higher language with binding to one without.  This idea comes from the subject of combinatory logic.    

\subsection{The \texorpdfstring{$SKI$}{SKI}-combinator calculus}
\label{ssec:SKI}

The $\lambda$-calculus is an elegant formal language which is the foundation of functional computation, the model of intuitionistic logic, and the internal logic of cartesian closed categories: this is the Curry--Howard--Lambek correspondence \cite{baezstay}.

Terms are constructed recursively by \textit{variables}, \textit{application}, and \textit{abstraction}, and the basic rewrite is \textit{beta reduction}, which substitutes the applied term for the bound variable: 
\[ \begin{array}{l}
     M,N := x \;\; | \;\; (M\; N) \;\; | \;\; \lambda x.M\\\\
     (\lambda x.M\; N) \Rightarrow M[N/x].
     \end{array}\]
Despite the apparent simplicity, there are complications regarding substitution. Consider the term $M = \lambda x.(\lambda y.(xy))$: if this is applied to the variable $y$, then $(M\; y) \Rightarrow \lambda y.(y\; y)$ --- but this is not intended, because the $y$ in $M$ is just a placeholder, it is ``bound'' by whatever will be plugged in, while the $y$ being substituted is ``free'', meaning it can refer to some other value or function in the program. Hence whenever a free variable is to be substituted for a bound variable, we need to rename the bound variable to prevent ``variable capture'' (e.g. $(M y) \Rightarrow \lambda z.(y\; z)$).

This problem was noticed early in the history of mathematical foundations, even before the $\lambda$-calculus, and so Moses Sch\"onfinkel invented \textbf{combinatory logic} \cite{combs}, a basic form of logic without the red tape of variable binding, hence without functions in the usual sense. The $SKI$-calculus is the ``variable-free'' representation of the $\lambda$-calculus; $\lambda$-terms are translated via ``abstraction elimination'' into strings of combinators and applications. This is a technique for programming languages to minimize the subtleties of variables. 

The insight of Stay and Meredith \cite{roswelt} is that even though enriched Lawvere theories have no variables, they can be used to study some programming languages through abstraction elimination.  When representing such a language as a $\sSet$-theory, vertices---i.e., 0-simplices---in the simplicial set $\hom(1,t)$ serve as closed terms. More generally, vertices in $\hom(t^n,t)$ serve as terms with $n$ free variables.  Rewrite rules going between such terms are edges---i.e., 1-simplices---in $\hom(t^n,t)$. 

To illustrate this, here is the theory of the $SKI$-calculus:

\[  \Th(\mathsf{SKI})\]
\[\begin{array}{lrl}
\textbf{type} & t &\\
\textbf{term constructors} & S\maps & 1 \to t\\
& K\maps & 1 \to t\\
& I\maps & 1 \to t\\
& (-\; -)\maps &  t^2 \to t\\
\textbf{structural congruence} & \text{n/a} &\\
\textbf{rewrites} & \sigma\maps & (((S\; -)\; =)\; \equiv) \Rightarrow ((-\; \equiv)\; (=\; \equiv))\\
& \kappa\maps & ((K\; -)\; =) \Rightarrow -\\
& \iota\maps & (I\; -) \Rightarrow -\\
\end{array}\]
These rewrites are implicitly universally quantified; i.e., they apply to arbitrary subterms $-, =, \equiv$ without any variable binding involved, by using the cartesian structure of the category. They are edges with vertices as follows:
\[\begin{tikzcd}
(((S\; -)\; =)\; \equiv)\maps \arrow[Rightarrow,d,swap,"\sigma"] & t^3 \arrow[d,equal] \arrow[r,"l^{-1} \times t^3"] & 1\times t^3 \arrow[r,"S \times t^3"] & t^4 \arrow[Rightarrow,d,shorten >=0.2cm,shorten <=0.2cm] \arrow[r,"(-\;-)\times t^2"] & t^3 \arrow[r,"(-\;-) \times t"] & t^2 \arrow[r,"(-\;-)"] & t\arrow[d,equal]\\
((-\; \equiv)\; (=\; \equiv))\maps & t^3 \arrow[r,"t^2 \times \Delta"] & t^4 \arrow[r,"t \times s \times t"]& t^4 \arrow[rr,"(-\;-) \times (-\;-)"]&& t^2 \arrow[r,"(-\;-)"]& t\\
((K\; -)\; =)\maps \arrow[Rightarrow,d,swap,"\kappa"] & t^2 \arrow[d,equal] \arrow[r,"l^{-1} \times t^2"] & 1\times t^2 \arrow[r,"K \times t^2"] & t^3 \arrow[Rightarrow,d,shorten >=0.2cm,shorten <=0.2cm] \arrow[r,"(-\;-)\times t"] & t^2 \arrow[r,"(-\;-)"] & t \arrow[d,equal]\\
-\maps & t^2 \arrow[rr,"t \times !"] && t \times 1 \arrow[rr,"r"] && t\\
(I\; -)\maps \arrow[Rightarrow,d,swap,"\iota"] & t \arrow[d,equal] \arrow[r,"l^{-1}"] & 1\times t \arrow[r,"I \times t",""name=1] & t^2 \arrow[r,"(-\;-)"] & t \arrow[d,equal]\\
-\maps & t \arrow[rrr,"t",""name=2] \arrow[Rightarrow,from=1,to=2,start anchor={[xshift=-0.8ex]},shorten >=0.3cm,shorten <=0.3cm] &&& t
\end{tikzcd}\]
Here $l,r$ denote the unitors and $s$ the symmetry of the product.

These abstract rules are evaluated on concrete terms by ``plugging in'' via precomposition.   For example:
\[\begin{tikzcd}
	((KS)I)\maps \arrow[Rightarrow,d,swap,"\kappa \circ (S\times I)"] & 1 \arrow[d,equal] \arrow[rr,"S\times I"] && t^2 \arrow[Rightarrow,d,shorten >=0.2cm,shorten <=0.2cm] \arrow[rr,"((K\; -)\; =)"] && t \arrow[d,equal]\\
	S\maps & 1 \arrow[rr,"S\times I"] && t^2 \arrow[rr,"-"] && t
\end{tikzcd}\]

A model of this theory is a $\sSet$-functor $\mu\maps \Th(\mathsf{SKI}) \to \sSet$ that preserves finite $\sSet$-products. This gives a simplicial set $\mu(t)$.
%\[\begin{tikzcd}
%1 \cong \mu(1) \arrow[r,"\mu(S)"] & \mu(t) & \mu(t^2) \arrow[l,swap,"\mu(\left(-\;-\right))"] \cong \mu(t)^2
%\end{tikzcd}\]
The images of the nullary operations $S,K,I$ under $\mu$ are distinguished vertices of $\mu(t)$, because $\mu$ preserves the terminal object, which ``points out'' vertices. The image of the binary operation $(-\; -)$ gives for every pair of vertices $(u,v) \in \mu(t)^2$ a vertex $(u\; v)$ in $\mu(t)$ which stands for their application. In this way all possible terms built from $S$, $K$, $I$ and application give vertices in $\mu(t)$.  Similarly, rewrites going between these terms give edges in $\mu(t)$.  Thus, $\mu$ gives a map of simplicial sets
\[  \mu_{1,t} \maps \Th(\mathsf{SKI})(1,t)\to \sSet(1,\mu(t)) \] 
that maps the ``syntactic'' graph of all closed terms and rewrites to the ``semantic'' graph: each rewrite between terms in the theory is sent to a rewrite between the images of these terms in the model.   

The fact that $\mu((-\;-)): \mu(t)^2\to \mu(t)$ is not just a function but a map of simplicial sets means that pairs of edges $(a\to b, c\to d)$ in $\Th(\SKI)(1,t)$ are sent to edges $(a\; b) \to (c\; d)$ in $\sSet(1,\mu(t))$.  This gives the full complexity of the theory: given a large term (program), there are many different ways it can be computed---and some take fewer steps than others:
\[\begin{tikzcd}
	((K\; S)\; (((S\; K)\; I)\; (I\; K))) \arrow[r,"((K\; S)\; \sigma)"] \arrow[dddd,swap,"\kappa"] & ((K\; S)\; ((K\;(I\; K))\; (I\; (I\; K)))) \arrow[d,"(((K\; S)\; \iota)\; (I\; (I\; K)))"]\\
	& ((K\; S)\; ((K\; K)\; (I\; (I\; K)))) \arrow[d,"((K\; S)\; ((K\; K)\; (I\; \iota)))"]\\
	& ((K\; S)\; ((K\; K)\; (I\; K))) \arrow[d,"((K\; S)\; ((K\; K)\; \iota))"]\\
	& ((K\; S)\; ((K\; K)\; K)) \arrow[d,"((K\; S)\; \kappa)"]\\
	S & ((K\; S)\; K)\arrow[l,"\kappa"]
      \end{tikzcd}\]
    
More generally, the image $\mu(t)^n$ is a simplicial set whose vertices are $SKI$-terms with $n$ free variables and whose edges are $n$-tuples of rewrites between such terms.  This is because the enriched functor $\mu$ gives maps of simplicial sets
\[  \mu_{t^n,t} \maps \Th(\mathsf{SKI})(t^n,t)\to \sSet(\mu(t)^n,\mu(t)). \] 
As the $n$-ary operations and rewrites thereof are built up from application and the three rewrites, everything works the same way as in the case $n = 0$. 

% This sends vertices of $\Th(\mathsf{SKI})(t^n,t)$ to vertices in $\sSet(\mu(t)^n,\mu(t))$, which are maps of simplicial sets from $\mu(t)^n$ to $\mu(t)$. It also sends edges of the former, $n$-tuples of rewrites, to edges of the latter, which are called \define{simplicial homotopies}: these send vertices of $\mu(t)^n$ to edges in $\mu(t)$, and so on.

This process is intuitive, but how do we actually define the model, as a functor, to pick out a specific graph? There are many models of $\Th(\mathsf{SKI})$, but in particular we care about the canonical \textit{free} model, which means that $\mu(t)$ is simply the graph of all closed terms and rewrites in the $SKI$-calculus. This utilizes the enriched adjunction of Thm.\ \ref{thm:monadicity}:
\[\begin{tikzcd}
\phantom{AAAA} \underline{\sSet} \arrow[bend left=20,"f_\sSet"]{rr}
& {\phantom{Aa} \ld} &
\arrow[bend left=20,"u_\sSet"]{ll} \underline{\Mod}(\Th(\mathsf{SKI}),\sSet)
\end{tikzcd}\]

Then the canonical model of closed terms and rewrites is simply the free model on the empty graph, $f_\sSet(\emptyset)$, i.e. the $\V$-functor $\T(1,-)\maps\T\to \V$. Hence for us, the syntax and semantics of the $SKI$ combinator calculus are unified in the model 
\[ \mu_{SKI}^\sSet:= \Th(\mathsf{SKI})(1,-)\maps \Th(\mathsf{SKI}) \to \sSet. \]   
Here we reap the benefits of the abstract construction: the graph $\mu_{SKI}^\sSet(t)$ represents the small-step operational semantics of the $SKI$-calculus: 
\[ (\mu(a) \to \mu(b) \in \mu_{SKI}^\sSet(t)) \iff (a \Rightarrow b \in \Th(\mathsf{SKI})(1,t)).\]

% Interestingly, in the free model on a nonempty graph, the vertices represent designated ``ground variables'', and edges represent rewrites of one variable into another. This is potentially useful for ``building in'' a language with other basic features not intrinsic to the theory.

We can now consider the base changes in Sec.\ \ref{sec:base_change_examples}, to translate between several important kinds of computation for the $SKI$-calculus. Given the above description of $\Th(\SKI)$ as enriched in $\sSet$, we can apply the ``free category'' realization functor to the hom-objects, turning these reflexive graphs into categories.

Here we enjoy the fact that this functor indeed preserves products, which is essential for considering tuples of programs running in parallel: for example if we designate $G_n := \Th(\SKI)(t^n,t)$, then the fact that $\FC(G_m\times G_n)\cong \FC(G_m)\times \FC(G_n)$ ensures that the execution of an $m$-term program and an $n$-term program simultaneously (but independently) is the same as executing one, then the other.

Thus $\FC$ translates the theory of $SKI$ from ``small-step'' to ``big-step'' operational semantics: \newline $\FC_*(\Th(\SKI))$ is the theory of the $SKI$ calculus, but now with hom-categories whose morphisms represent finite sequences of rewrite edges in the original theory.

We can continue these base-changes to ``full-step'' and denotational semantics, by applying the ``free poset'' and ``free set'' (connected components) functors to the hom-objects of this theory. This process demonstrates the idea of having a ``spectrum'' of detail with which to analyze the semantics of a programming language, or general algebraic theory.

For example, consider the following computation:
\[\begin{tikzcd}
&	(((S\; K)\; (I\; K))\; S) \arrow[rd,bend left=10,"\sigma"] \arrow[ld,bend right=10,swap,"\iota"] \arrow[ddl,dotted,bend left=10,"\sigma\iota"] \arrow[ddl,dotted,bend right=10,"\iota\sigma"] \arrow[ddr,dotted,swap,bend left=10,yshift=8pt,"\kappa\sigma"] \arrow[ddr,dotted,bend right=10,swap,yshift=8pt,"\kappa\sigma\iota"] \arrow[ddr,dotted,swap,bend right=25,"\kappa\iota\sigma"] &\\
(((S\; K)\; K)\; S) \arrow[d,swap,"\sigma"] & & ((K\; S)\; ((I\; K)\; S)) \arrow[lld,swap,"\iota"] \arrow[d,"\kappa"]\\
((K\; S)\; (K\; S)) \arrow[rr,swap,"\kappa"] & & S
\end{tikzcd}\]
The solid arrows are the one-step rewrites of the initial $\sSet$-theory; applying $\FC_*$ gives the dotted composites, and $\FP_*$ asserts that all composites between any two objects are equal. Finally, $\FS_*$ collapses the whole diagram to $S$. This is a simple demonstration of the basic stages of computation: small-step, big-step, full-step, and denotational semantics.

\subsection{Change of theory}

We can equip term calculi with \textit{reduction contexts}, which determine when rewrites are valid, thus giving the language a certain \textbf{evaluation strategy}. For example, the ``weak head normal form'' is given by only allowing rewrites on the left-hand side of the term.

We can do this for $\Th(\mathsf{SKI})$ by adding a reduction context marker as a unary operation, and a structural congruence rule which pushes the marker to the left-hand side of an application; lastly we modify the rewrite rules to be valid only when the marker is present:
$$\Th(\mathsf{SKI}+\mathsf{R})$$
\[\begin{array}{lrl}
\textbf{sort} & t &\\
\textbf{term constructors} & S,K,I \maps &1 \to t\\
& R\maps & t \to t\\
& (-\; -)\maps & t^2 \to t\\\\
    \textbf{structural congruence} & R(-\; =) & = (R-\; =) \\
& RR & = R\\\\
    \textbf{rewrites} & \sigma_r\maps & (((RS\; -)\; =)\; \equiv) \Rightarrow ((R-\; \equiv)\; (=\; \equiv))\\
& \kappa_r\maps & ((RK\; -)\; =) \Rightarrow R-\\
              & \iota_r\maps & (RI\; -) \Rightarrow R-\\
  %  & n_r\maps & (-\; R=) \Rightarrow (-\; =) \\
\end{array}\]

The $SKI$-calculus is thereby equipped with ``lazy evaluation'', an essential paradigm in modern programming. This represents a broad potential application of equipping theories with computational methods, such as evaluation strategies.

Moreover, these equipments can be added or removed as needed: using change-of-theory, we can utilize a ``free reduction'' $\sSet$-functor $f_R\maps\Th(\mathsf{SKI})\to \Th(\mathsf{SKI}+\mathsf{R})$:
\[\begin{array}{rrcl}
\text{objects} & t^n & \mapsto & t^n\\
\text{hom-vertices} & S,K,I & \mapsto & S,K,I\\
& (-\; -) & \mapsto & R(-\; -)\\
\text{hom-edges} & \sigma, \kappa, \iota & \mapsto & \sigma_r, \kappa_r, \iota_r\\
\end{array}\]
This essentially interprets ordinary $SKI$ as having every subterm be a reduction context. This is a $\sSet$-functor because its hom component consists of graph-homomorphisms $$f_{n,m}\maps \Th(\mathsf{SKI})(t^n,t^m) \to \Th(\mathsf{SKI}+\mathsf{R})(t^n,t^m)$$ which simply send each application to its postcomposition with $R$, and each rewrite to its ``marked'' correspondent.

So, by precomposition this induces the change of theory on categories of models: 
\[   f_R^*\maps \Mod(\Th(\mathsf{SKI} + \mathsf{R}),\C) \to \Mod(\Th(\mathsf{SKI}),\C) \]
for all semantic categories $\C$, which forgets the reduction contexts.

Similarly, there is a $\sSet$-functor $u_R\maps \Th(SKI+R)\to \Th(SKI)$ which forgets reduction contexts, by sending $\sigma_r,\kappa_r,\iota_r \mapsto\sigma, \kappa,\iota$ and $R \mapsto id_t$; this latter is the only way that the marked reductions can be mapped coherently to the unmarked. However, this means that $u_R^*$ does not give the desired change-of-theory of ``freely adjoining contexts'', because collapsing $R$ to the identity eliminates the significance of the marker.

This illustrates a key aspect of categorical universal algebra: because change-of-theory is given by precomposition and is thus contravariant, \textit{properties} (equations) and \textit{structure} (operations) can only be removed. This is a necessary limitation, at least in the present setup, but there are ways to make do. These abstract theories are not floating in isolation but are implemented in code: one can simply use a ``maximal theory'' with all pertinent structure, then selectively forget as needed.

\section{Conclusion}
\label{sec:conclusion}

We have shown how enriched Lawvere theories provide a framework for unifying the structure and behavior of formal languages. Enriching theories in category-like structures reifies operational semantics by incorporating rewrites between terms, and appropriate functors between enriching categories induce change-of-base functors between categories of enriched theories and models---this simplified condition is obtained by using only natural number arities. This idea is motivated by an example sequence of such functors, which provide a spectrum of detail in which to study the rewriting properties of a theory.

Change of base, along with change of theory and change of context, can be used to create a single category $\mathrm{Mod}$, which consists of all models of all enriched Lawvere theories in all contexts. We have demonstrated these concepts with the theory of combinatory logic, $\Th(\mathsf{SKI})$, describing a change of base from small-step operational semantics to big-step to full-step to denotational semantics.

Finally, we suggest that there are many interesting change-of-base functors, by considering an endofunctor on the category of labelled transition systems, which quotients by the bisimulation relation and is indeed a change of base.

\newpage

\appendix

\section{Natural Number Arities}
\label{sec:arities}

In this appendix we prove the lemmas required for Theorem \ref{thm:monadicity} and our study of base change in Section \ref{sec:base_change}.  Throughout we assume $\V$ is cartesian closed with chosen $n$-fold coproducts $n_\V$ of its terminal object. 

We begin with a study of $\NN_\V$, the full subcategory of $\V$ on the objects $n_\V$.
First we must resolve a potential ambiguity.  On the one hand,
for any object $b$ of $\V$ we can form the exponential $b^{n_\V}$.   On the 
other hand, we can take the product of $n$ copies of $b$, which we call $b^n$.    
Luckily these are the same, or at least naturally isomorphic:

\begin{lemma}
\label{lem:powers_1}
The functors $(-)^{n_\V} \maps \V\to \V$ and $(-)^n\maps \V\to \V$ are naturally isomorphic.  
\end{lemma}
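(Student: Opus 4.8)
The plan is to exhibit the natural isomorphism via the universal property of the exponential $b^{n_\V}$ and the fact that $n_\V = \sum_{i \in n} 1_\V$ is a coproduct of $n$ copies of the terminal object, together with the standard fact that in a cartesian closed category the contravariant hom turns coproducts in the exponent into products.

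First I would unwind the definition: for any object $a$ of $\V$, by currying we have $\V(a, b^{n_\V}) \cong \V(a \times n_\V, b)$. Since the product functor $a \times (-)$ is a left adjoint (cartesian closedness), it preserves the coproduct $n_\V = \sum_{i \in n} 1_\V$, so $a \times n_\V \cong \sum_{i \in n}(a \times 1_\V) \cong \sum_{i \in n} a$. Hence $\V(a \times n_\V, b) \cong \prod_{i \in n}\V(a, b) \cong \V(a, b^n)$, using the universal property of the coproduct $\sum_{i \in n} a$ and then of the product $b^n$. Chaining these, $\V(a, b^{n_\V}) \cong \V(a, b^n)$ naturally in $a$, and by Yoneda this gives an isomorphism $b^{n_\V} \cong b^n$. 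I would then check that this isomorphism is natural in $b$ as well — it is, because every step in the chain (currying, the coproduct-preservation of $a \times (-)$, the hom/coproduct and hom/product adjunction isomorphisms) is natural in $b$, so the composite is a natural isomorphism of functors $(-)^{n_\V} \Rightarrow (-)^n$.

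Alternatively, and perhaps more cleanly for a written proof, I would argue directly with exponentials: $b^{n_\V} = b^{\sum_{i \in n} 1_\V} \cong \prod_{i \in n} b^{1_\V} \cong \prod_{i \in n} b = b^n$, where the middle isomorphism is the standard "exponential out of a coproduct is a product of exponentials" (this is even noted in the excerpt's discussion of $\V$-coproducts in $\underline{\V}$, namely $u^{v+w} \cong u^v \times u^w$), and $b^{1_\V} \cong b$ since $1_\V$ is terminal. One then observes that each of these isomorphisms is natural in $b$.

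I do not expect a serious obstacle here; the statement is essentially a repackaging of standard cartesian-closed-category identities. The only point requiring a little care is to make sure the isomorphism is genuinely \emph{natural} — i.e. that the componentwise isomorphisms $b^{n_\V} \cong b^n$ assemble into a natural transformation — rather than merely a family of isomorphisms. This is handled by noting that the isomorphism is built entirely from canonical maps (the coproduct coprojections, the product projections, and the adjunction units/counits), all of which are natural; equivalently, one can simply invoke the Yoneda-style argument above, where naturality in $b$ is automatic from naturality of each step in the chain of hom-isomorphisms.
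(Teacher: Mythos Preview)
Your proposal is correct and your first argument is essentially identical to the paper's own proof: it runs through the same chain $\V(a,b^{n_\V}) \cong \V(a\times n_\V,b) \cong \V(n\cdot a,b) \cong \V(a,b)^n \cong \V(a,b^n)$, invoking currying, distributivity of $\times$ over the coproduct $n_\V$, unitality, and the universal properties of coproduct and product, then concludes by Yoneda and naturality in $b$. Your alternative via $b^{\sum 1_\V} \cong \prod b^{1_\V} \cong \prod b$ is just a compressed rephrasing of the same computation.
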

\begin{proof}
If $a,b \in \V$, then
\[\begin{array}{rcll}
	\V(a,b^{n_\V}) & \cong & \V(a\times n_\V,b) & \text{hom-tensor adjunction}\\
	& = & \V(a\times (n \cdot 1_\V),b) & \text{definition of } n_\V\\ 
	& \cong & \V(n \cdot( a\times 1_\V), b) & \text{products distribute over coproducts}\\  
	& \cong & \V(n \cdot a,b) & \text{unitality}\\  
	& \cong & \V(a,b)^n & \text{definition of coproduct}\\
	& \cong & \V(a,b^n) & \text{definition of product}.\\
	\end{array}
\]
Each of these isomorphisms is natural in $a$ and $b$, so by the Yoneda lemma $(-)^{n_\V} \cong (-)^n$.
\end{proof}

We can now understand coproducts, products and exponentials in $\NN_\V$:

\begin{lemma}
\label{lem:NN}
If $\V$ is any cartesian closed category with chosen coproducts of the initial object then
$\NN_\V$ is cartesian closed, with finite coproducts.  The unique initial object of $\NN_V$
is $0_\V$.  The binary coproducts in $\NN_\V$ are unique, given by
\[     m_\V + n_\V = (m + n)_\V . \]
The unique terminal object of $\NN_\V$ is $1_\V$, and the binary products are unique, given by
\[      m_\V \times n_\V = (mn)_\V  .\]
Exponentials in $\NN_\V$ are also unique, given by 
\[     {m_\V}^{n_\V} = (m^n)_\V   .\]
\end{lemma}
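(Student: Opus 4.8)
The plan is to bootstrap everything from two properties of $\V$: that each chosen object $n_\V$ is a genuine coproduct of $n$ copies of $1_\V$, and that $\V$ is cartesian closed, so that $(-)\times v$ has a right adjoint and hence preserves colimits for every $v$. The guiding principle is that whenever a limit or colimit of objects of $\NN_\V$, formed in $\V$, is (isomorphic to) an object of $\NN_\V$, it is automatically the corresponding limit or colimit in $\NN_\V$: since $\NN_\V$ is a full subcategory of $\V$, its universal properties are just those of $\V$ with the test objects restricted to the $k_\V$. Applying this to coproducts: by associativity of coproducts, $m_\V + n_\V$ is a coproduct of $m+n$ copies of $1_\V$, hence isomorphic to the chosen object $(m+n)_\V$, and the empty coproduct is $0_\V$; both lie in $\NN_\V$, so they serve as the binary coproducts and the initial object of $\NN_\V$.

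Next I would treat products and the terminal object, and this is the one place the cartesian \emph{closed} structure of $\V$ (rather than mere finite products) is used. Since $(-)\times n_\V$ has right adjoint $(-)^{n_\V}$ it preserves coproducts, so using unitality of $\times$, the definition of $n_\V$, and associativity of coproducts,
\[ m_\V \times n_\V \;\cong\; \sum_{i\in m}\bigl(1_\V \times n_\V\bigr) \;\cong\; \sum_{i\in m}\sum_{j\in n} 1_\V \;\cong\; (mn)_\V, \]
again an object of $\NN_\V$; and $1_\V$, being terminal in $\V$, is terminal in $\NN_\V$, with $1_\V = 1\cdot 1_\V$. So $\NN_\V$ has finite products given by these formulas, and in particular $Z\times n_\V \in \NN_\V$ whenever $Z \in \NN_\V$.

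For exponentials I would invoke Lemma \ref{lem:powers_1}: $m_\V^{n_\V} \cong m_\V^{n}$, the $n$-fold product of $m_\V$ with itself, which by the product formula just established --- iterated, with the empty case giving $1_\V$ --- equals $(m^n)_\V \in \NN_\V$. Because $Z \times n_\V \in \NN_\V$ for $Z \in \NN_\V$ and $\NN_\V$ is full in $\V$, the exponential adjunction of $\V$ restricts to natural isomorphisms $\NN_\V(Z \times n_\V, m_\V) \cong \NN_\V(Z, (m^n)_\V)$, exhibiting $(m^n)_\V$ as the exponential in $\NN_\V$; so $\NN_\V$ is cartesian closed. In every case the ``uniqueness'' asserted is the ordinary uniqueness up to canonical isomorphism of objects characterized by a universal property.

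I do not anticipate a serious obstacle: the argument is largely careful bookkeeping with ``finite coproduct of $1_\V$''. The one point needing attention is justifying that ``computed in $\V$, landing in $\NN_\V$'' genuinely yields the universal property \emph{inside} the full subcategory $\NN_\V$, together with checking that distributivity is available exactly when it is invoked --- which is precisely why the hypothesis that $\V$ be cartesian \emph{closed}, not merely a category with finite products, cannot be dropped.
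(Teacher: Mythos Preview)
Your argument is essentially the paper's own: transfer universal properties along the full inclusion $\NN_\V \hookrightarrow \V$, use distributivity of $\times$ over $+$ (available because $\V$ is cartesian closed) for products, and invoke Lemma~\ref{lem:powers_1} for exponentials. The paper does exactly these steps, and your treatment of why the exponential adjunction restricts to $\NN_\V$ is if anything slightly more explicit than the paper's.

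The one point you miss is the reading of ``unique'' and of the displayed \emph{equalities}. The lemma asserts $m_\V + n_\V = (m+n)_\V$, $m_\V \times n_\V = (mn)_\V$, ${m_\V}^{n_\V} = (m^n)_\V$ as equalities of objects, and claims the initial and terminal objects are \emph{unique}, not merely unique up to isomorphism. Your final sentence explicitly downgrades this to ``uniqueness up to canonical isomorphism'', which is weaker than what is stated. The paper obtains the stronger conclusion by observing that $\NN_\V$ is skeletal (its objects are the chosen $n_\V$, one for each natural number), so that any object isomorphic to $(m+n)_\V$, $(mn)_\V$, or $(m^n)_\V$ in $\NN_\V$ must actually equal it. You should add that observation; without it your argument proves the right isomorphisms but not the equalities the lemma records, and those equalities are used later (e.g.\ in constructing the isomorphism $\tilde{F}\colon \A_\W \to F_*(\A_\V)$ in Section~\ref{sec:base_change}).
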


\begin{proof}
 In $\V$ we know that $0_\V$ is an initial object and $1_\V$ is a terminal object, by
 definition.  Since the subcategory $\NN_\V$ is skeletal $0_\V$ is the unique 
 initial object and $1_\V$ is the unique terminal object in $\NN_\V$.   Similarly, in
 $\V$ we have defined $(m+n)_\V$ to be a coproduct of $m_\V$ and $n_\V$, so 
 in $\NN_\V$ it is the unique such, and we can unambiguously write 
 \[      m_\V + n_\V = (m + n)_\V .  \]
 Products distribute over coproducts in any cartesian closed category, so in $\V$ we have 
 \[   m_\V \times n_\V \cong (1_\V + \cdots + 1_\V) \times (1_\V + \cdots + 1_\V) 
 \cong (mn)_\V \]
 where in the second step we use the distributive law twice.
 It follows that $\NN_\V$ has finite products, and since this subcategory is skeletal
 they are unique, given by
 \[   m_\V \times n_\V = (mn)_\V.  \]
 Finally, by Lemma \ref{lem:powers_1} we have
 \[  {m_\V}^{n_\V} \cong m_\V^n \cong \prod_{i = 1}^n m_\V \cong
 (m^n)_\V .\]
 It follows that $\NN_\V$ has exponentials, and since this subcategory is skeletal they
 are unique, given by
 \[     m_\V^{n_\V} = (m^n)_\V .  \qedhere\]
\end{proof}

We warn the reader that $\hom(m_\V,n_\V)$ may not have $n^m$ elements.  It does
in $\sSet,\Cat,\Pos$ and of course $\Set$, but not in $\V = \Set^k$, where
$|\hom(m_\V, n_\V)| = n^{km}$.    In fact, whenever $\NN_\V$ has finite hom-sets
it is equivalent to $\FinSet^k$ for some $k$.   The reason is that $2_\V$ is 
an internal Boolean algebra in $\V$, so its set of elements $\hom(1_\V,2_\V)$ 
must be some Boolean algebra $B$ in $\Set$.   A further argument due to Garner and Trimble shows that $\NN_\V$ is completely characterized, up to equivalence, by this Boolean algebra, and any Boolean algebra can occur \cite{nCafe}.   If this Boolean algebra is finite it must be isomorphic to $\{0,1\}^k$ for some $k \ge 0$.  In this case, $\NN_\V$ is equivalent to $\FinSet^k$.

Now suppose $\C$ is a $\V$-category.   The question arises whether the
power of an object $c \in \C$ by $n_\V$ must also be the $\V$-product of $n$ copies
of $c$.   The answer is yes:

\begin{lemma}
\label{lem:powers_2}
Let $\C$ be a $\V$-category and $c \in \Obj(\C)$.  Then the power $c^{n_\V}$ exists
if and only if the $n$-fold $\V$-product $c^n$ exists, in which case they are isomorphic.
\end{lemma}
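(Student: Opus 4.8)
The plan is to prove both directions by exhibiting a single $\V$-natural isomorphism of representable functors and invoking the enriched Yoneda lemma, exactly in the style of Lemma~\ref{lem:powers_1}. The key observation is that the defining universal property of a power, $\C(-,c^{n_\V}) \cong \C(-,c)^{n_\V}$ in $\underline{\V}$, and the defining universal property of the $\V$-product, $\C(-,c^n) \cong \prod_{i=1}^n \C(-,c_i)$ with all $c_i = c$, have right-hand sides that are already naturally isomorphic objects of $\underline{\V}$: by Lemma~\ref{lem:powers_1} (applied to the enriching category $\V$ itself, i.e.\ to the hom-object $v = \C(d,c)$) we have $v^{n_\V} \cong v^n = \prod_{i=1}^n v$, naturally in $v$. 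So the two representing conditions are the same condition, phrased two ways.

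First I would make precise what "the power $c^{n_\V}$ exists" unpacks to: an object $c^{n_\V}$ together with a $\V$-natural iso $e\maps \C(-,c^{n_\V}) \Rightarrow \C(-,c)^{n_\V}$ of $\V$-functors $\C^{\op}\to\underline{\V}$, as in Eq.~\eqref{eq:power}. Likewise "the $n$-fold $\V$-product $c^n$ exists" unpacks to an object with a $\V$-natural iso $\C(-,c^n)\Rightarrow \prod_{i=1}^n \C(-,c)$. Next I would assemble, from Lemma~\ref{lem:powers_1}, a $\V$-natural isomorphism $\phi\maps (-)^{n_\V} \Rightarrow \prod_{i=1}^n (-)$ as endo-$\V$-functors of $\underline{\V}$ — one must check that the isomorphism constructed there, being built from the hom–tensor adjunction, distributivity, unitality and the universal properties of finite (co)products, is not merely $\Set$-natural but $\V$-natural; this is routine since all those constituent isomorphisms are themselves $\V$-natural (they are instances of the standard enriched adjunctions and universal properties). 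Then, given $c^{n_\V}$, the composite $\C(-,c^{n_\V}) \xRightarrow{e} \C(-,c)^{n_\V} \xRightarrow{\phi_{\C(-,c)}} \prod_{i=1}^n \C(-,c)$ is a $\V$-natural iso, so $c^{n_\V}$ also represents the $\V$-product, whence $c^n$ exists and $c^n \cong c^{n_\V}$; running the same composite backwards with $\phi^{-1}$ handles the converse direction. Uniqueness up to isomorphism of representing objects (enriched Yoneda) gives the claimed iso $c^{n_\V}\cong c^n$.

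The main obstacle I anticipate is the bookkeeping around $\V$-naturality: one has to be a little careful that Lemma~\ref{lem:powers_1}'s isomorphism is genuinely a 2-cell in $\V\Cat$ rather than just a natural transformation of underlying ordinary functors, and that "exists" in the statement means "exists as an enriched power/enriched product" so that the representing property is with respect to $\V$-functors $\C^{\op}\to\underline{\V}$ and $\V$-natural transformations. Once the $\V$-naturality of $\phi$ is in hand, the argument is a two-line diagram chase; nothing deeper is needed. An alternative, slightly slicker route avoids even mentioning $c^n$ directly: observe that for $n_\V = 1_\V + \cdots + 1_\V$ in $\V$, powers are "continuous" in the exponent in the sense that $c^{u+w}\cong c^u \times c^w$ and $c^{1_\V}\cong c$ whenever these exist (a standard fact about enriched powers, dual to $u^{v+w}\cong u^v\times u^w$ recalled in Section~\ref{sec:enrichment}); iterating gives $c^{n_\V}\cong \prod_{i=1}^n c^{1_\V}\cong c^n$, with each step both constructing the next power and identifying it with the corresponding partial product, so existence transfers in both directions. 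I would present the first route as the main proof since it parallels Lemma~\ref{lem:powers_1} most transparently.
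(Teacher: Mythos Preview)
Your proposal is correct and follows essentially the same approach as the paper: both arguments invoke Lemma~\ref{lem:powers_1} to identify the functors $(-)^{n_\V}$ and $(-)^n$ on $\V$, and conclude that the two universal properties coincide. The only difference is packaging: the paper phrases everything in terms of the ``universal cone'' elements $p\maps 1_\V \to \C(b,c)^n$ and $q\maps 1_\V \to \C(b,c)^{n_\V}$ introduced in Section~\ref{sec:enrichment}, transports one to the other along the isomorphism of Lemma~\ref{lem:powers_1}, and observes that universality is preserved; you instead compose the representing $\V$-natural isomorphisms directly and appeal to enriched Yoneda. The paper's cone formulation has the mild advantage of making the $\V$-naturality bookkeeping you flag less visible (it is implicitly absorbed into the reconstruction of the representing isomorphism from the cone, Eq.~\eqref{eq:projection}), but substantively the two arguments are the same.
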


\begin{proof}
In Section \ref{sec:enrichment} we saw that an object $b \in \Obj(\C)$ is an $n$-fold $\V$-product of copies of $c$ precisely when it is equipped with a universal cone 
\[        p \maps 1_\V \to \C(b,c)^n . \]
Similarly, $b$ is an $n_\V$-power of $c$ when it is equipped with a universal
cone 
\[       q \maps 1_\V \to \C(b,c)^{n_\V} .\]
The universality properties have the same form, and by Lemma \ref{lem:powers_1} the functors $(-)^n \maps \V\to \V$ and $(-)^{n_\V} \maps \V\to \V$ are naturally isomorphic.   Thus, given either sort of universal cone we get the other, so an object is an $n$-fold product of copies of $c$ if and only if it is the $n_\V$-power of $c$. 
\end{proof}

\iffalse
\begin{lemma}
If $\V$ is a cartesian category with chosen coproducts of the initial object then a
$\V$-category $\C$ has powers by all objects $n_\V$ of $\NN_\V$ if and only $\C^\op$
has finite coproducts.
\end{lemma}

\begin{proof}
Uniqueness holds because $\A_\V$ is skeletal.  We need to show 
that any natural numbers $m,n$ and $a \in \V$ there is an isomorphism
\[     \A_\V(a, (mn)_\V) \cong \A_\V(a,m_\V)^n  .....\]
\[\begin{array}{rcll}
	\underline{\NN}_\V((mn)_\V, a) & \cong & \underline{\NN}_\V(m \cdot n_\V,a) & \text{Lemma \ref{lem:NN}} \\
	&\cong &  a^{m \cdot n_\V} & \text{definition of hom-objects in $\underline{\NN}_\V$} \\
	&\cong & \prod_{i=1}^m a^{n_\V} & \text{exponentiation maps coproducts to products} \\
	&\cong & \prod_{i=1}^m a^n  & \text{Lemma \ref{lem:powers_1}} \\
	&\cong& a^{mn} \\
	&\cong& 
\end{array}
\]
\end{proof}	
\fi

%Given $\V$-categories $\C,\D$ with $v$-powers for some object $v \in \V$, we defined what it means for a $\V$-functor $F \maps \C \to \D$ to preserve $v$-powers at the end of Section \ref{sec:enrichment}.  We say $F$ \textbf{preserves $\NN_\V$-powers} if it preserves $n_\V$-powers for all $n$.   

\begin{lemma}
\label{lem:powers_3}
Suppose $\C$ is a $\V$-category such that every object is the $n$-fold $\V$-product $c^n$ of some object $c$.   Then a $\V$-functor $F \maps \C \to \D$ preserves finite $\V$-products if and only if it preserves powers by all objects of $\NN_\V$.
\end{lemma}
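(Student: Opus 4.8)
The entire proof reduces to Lemma \ref{lem:powers_2}, applied inside both $\C$ and $\D$: for any object $d$ of a $\V$-category, being the $n_\V$-power of $d$ and being the $n$-fold $\V$-product $d^n$ are the same property, and — via the natural isomorphism $(-)^n \cong (-)^{n_\V}$ of Lemma \ref{lem:powers_1} — a universal cone witnessing one translates into a universal cone witnessing the other. So ``$F$ preserves $n_\V$-powers'' is literally the same statement as ``$F$ preserves $n$-fold $\V$-products of a single object'', and the real content of the lemma is that, given the standing hypothesis on $\C$, this restricted preservation property upgrades to preservation of \emph{all} finite $\V$-products.

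The forward implication needs nothing from the hypothesis. An $n_\V$-power of $d$ is, via Lemma \ref{lem:powers_2}, just the finite $\V$-product of the $n$-tuple $(d,\dots,d)$; so if $F$ preserves every finite $\V$-product it preserves these in particular, and reapplying Lemma \ref{lem:powers_2} in $\D$ converts the image back into a universal $n_\V$-power cone. Hence $F$ preserves powers by all objects of $\NN_\V$.

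For the converse let $c$ be the object of which, by hypothesis, every object of $\C$ is a $\V$-power, and let $p \maps 1_\V \to \prod_{i=1}^k \C(b,b_i)$ be a universal cone exhibiting $b$ as $\prod_{i=1}^k b_i$. Write $b \cong c^m$ and $b_i \cong c^{m_i}$ with $m = m_1 + \cdots + m_k$. Since finite $\V$-products are unique up to isomorphism and $\V$-functors preserve isomorphisms, it suffices to check $F$-preservation for one chosen cone per product, and I would take the canonical one coming from the $m_\V$-power structure of $c^m$ by regrouping its $m$ coordinates into blocks of sizes $m_1,\dots,m_k$: the composite $\V$-natural isomorphism $\C(-,c^m) \cong \C(-,c)^{m_\V} \cong \prod_{j=1}^m \C(-,c) \cong \prod_{i=1}^k \C(-,c)^{m_i} \cong \prod_{i=1}^k \C(-,c^{m_i})$ is exactly the defining isomorphism of a product cone, whose underlying element $\bar p$ is built from the $m_\V$-power universal cone $q$ of $c$ using only composition, the product structure maps of $\V$, and the isomorphism of Lemma \ref{lem:powers_1} — all of which any $\V$-functor respects. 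Thus $F(\bar p)$ is the same regrouping of $F(q)$; by hypothesis $F$ preserves the $m_\V$-power cone, so $F(q)$ is universal, and the regrouping of a universal $m$-fold $\V$-product cone is again universal for the regrouped product, so $F(\bar p)$ witnesses $F(c^m) \cong \prod_i F(c^{m_i})$, completing the converse.

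The main obstacle is exactly this last step's bookkeeping: exhibiting the product cone $\bar p$ as a construction out of the power cone $q$ that is visibly preserved by any $\V$-functor, and verifying that ``regrouping coordinates'' sends universal cones to universal cones — both routine in spirit, but somewhat fiddly to write out carefully in terms of the element-style universal cones $1_\V \to \prod \C(-,-)$ from Section \ref{sec:enrichment} rather than ordinary limit cones. A secondary, purely definitional point is that ``$F$ preserves $n_\V$-powers'' should be read as ``$F$ sends universal power-cones to universal power-cones'', which makes sense without assuming $\D$ has all such powers.
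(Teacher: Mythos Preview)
Your proof is correct and follows the same two-step strategy as the paper: first identify preservation of $\NN_\V$-powers with preservation of finite $\V$-powers (iterated $\V$-products of a single object) via the natural isomorphism $(-)^{n_\V}\cong(-)^n$ of Lemma~\ref{lem:powers_1}, then use the hypothesis to upgrade from $\V$-power preservation to arbitrary finite $\V$-product preservation. The paper dispatches the second step in a single sentence (``This follows from the assumption\dots''), whereas you actually spell out the regrouping argument---writing $b_i\cong c^{m_i}$, $b\cong c^m$ with $m=\sum m_i$, and showing the product cone is a reassembly of the power cone that any $\V$-functor respects---so your version is, if anything, more complete.
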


\begin{proof}
Define a ``finite $\V$-power'' to be a finite $\V$-product of $n$ copies of the same object.
The $\V$-functor $F$ preserves finite $\V$-powers if and only if it maps any universal cone
\[        p \maps 1_\V \to \C(b,c)^n  \]
in $\C$ to a universal cone in $\D$.   Similarly, $F$ preserves powers by all objects of $\NN_\V$ if and only if it maps any universal cone
\[       q \maps 1_\V \to \C(b,c)^{n_\V}  \]
in $\C$ to a universal cone in $\D$.   Two kinds of universality are involved here, but
since they have the same form, and since Lemma \ref{lem:powers_1} says the functors $(-)^n \maps \V\to \V$ and $(-)^{n_\V} \maps \V\to \V$ are naturally isomorphic,
it follows that $F$ preserves finite $\V$-powers if and only if it preserves powers by all objects of $\NN_\V$.

It thus suffices to show that $F$ preserves finite $\V$-products if and only if it preserves
finite $\V$-powers.  This follows from the assumption that every object is the $n$-fold $\V$-product $c^n$ of some object $c$. 
\end{proof}

\begin{lemma}
\label{lem:powers_4}
Let $\V$ be cartesian closed with chosen finite coproducts of the terminal object and let
$\T$ be a $\V$-category.  These conditions for a $\V$-functor $\tau \maps \A_\V \to \T$ are equivalent:
\begin{enumerate}
\item $(T,\tau)$ is a $\V$-theory,
\item $\tau$ preserves finite $\V$-products,
\item $\tau$ preserves powers by objects of $\NN_\V$.
\end{enumerate}
\end{lemma}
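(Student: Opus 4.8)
The plan is to reduce everything to Lemma \ref{lem:powers_3}. The equivalence of (1) and (2) is essentially a matter of unwinding Definition \ref{defn:V-theory}: $(\T,\tau)$ is a $\V$-theory precisely when $\tau$ is the identity on objects \emph{and} preserves finite $\V$-products, so (1) $\Rightarrow$ (2) is immediate, and (2) $\Rightarrow$ (1) holds once we also record that the $\V$-functor under consideration is the identity on objects (this is part of the data whenever one writes down such a $\tau$). The substantive content is the equivalence of (2) and (3), and for this I would apply Lemma \ref{lem:powers_3} directly with $\C = \A_\V$.

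To invoke that lemma I first have to verify its hypothesis, namely that every object of $\A_\V$ is a finite $\V$-product of copies of a single object. For this I would lean on the discussion of $\V$-(co)products in Section \ref{sec:enrichment}. By construction $n_\V = \sum_{i\in n} 1_\V$ is a finite coproduct in $\V$ of copies of the terminal object; since $\V$ is cartesian closed, that coproduct is automatically a $\V$-coproduct in $\underline{\V}$, and as $\underline{\NN}_\V$ is a full sub-$\V$-category of $\underline{\V}$ containing the relevant objects, $n_\V$ is a $\V$-coproduct of $n$ copies of $1_\V$ in $\underline{\NN}_\V$ as well (with $0_\V$ the $\V$-initial object). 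Passing to the opposite $\V$-category $\A_\V = \underline{\NN}_\V^{\op}$, this says exactly that each $n_\V$ is the $n$-fold $\V$-product of copies of $1_\V$, and $0_\V$ is $\V$-terminal. Hence every object of $\A_\V$ is a finite $\V$-power of $1_\V$, so the hypothesis of Lemma \ref{lem:powers_3} is satisfied.

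With that in hand, Lemma \ref{lem:powers_3} applied to $\tau \maps \A_\V \to \T$ yields immediately that $\tau$ preserves finite $\V$-products if and only if it preserves powers by all objects of $\NN_\V$, i.e. (2) $\Leftrightarrow$ (3). Combined with (1) $\Leftrightarrow$ (2) this closes the argument; as a byproduct one also sees that $\A_\V$ has finite $\V$-products, as was promised earlier.

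I do not anticipate a real obstacle here. The only step that needs genuine care is checking that the ordinary coproducts $n_\V$ satisfy the stronger \emph{enriched} universal property required for them to be $\V$-products in $\A_\V$; but this is precisely the cartesian-closedness input already recorded in Section \ref{sec:enrichment}, so it is a matter of citing the correct statement rather than proving something new. A secondary, purely bookkeeping point is to make sure the hypotheses and the direction of the equivalence in Lemma \ref{lem:powers_3} line up with the present situation — which they do, since $\A_\V$ is exactly a $\V$-category all of whose objects are finite $\V$-powers of one object.
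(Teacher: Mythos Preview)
Your proposal is correct and follows essentially the same route as the paper: the equivalence (1) $\Leftrightarrow$ (2) is by definition, and (2) $\Leftrightarrow$ (3) is obtained by applying Lemma \ref{lem:powers_3} to $\A_\V$, after checking that every object of $\A_\V$ is a finite $\V$-product of copies of $1_\V$ via the identification of $\V$-products in $\A_\V = \underline{\NN}_\V^{\op}$ with $\V$-coproducts in $\underline{\NN}_\V$. Your version spells out a bit more carefully why the ordinary coproducts $n_\V$ are $\V$-coproducts in $\underline{\NN}_\V$ (full sub-$\V$-category of $\underline{\V}$, cartesian closedness), and you are right to flag the identity-on-objects condition as part of the ambient data for (2) $\Rightarrow$ (1), but these are elaborations of the same argument rather than a different one.
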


\begin{proof} Conditions 1 and 2 are equivalent by definition.
Since $\A_\V = \underline{\NN}_\V^\op$, finite $\V$-products in $\A_\V$ are 
the same as finite $\V$-coproducts in $\underline{\NN}_\V$, which are the same as 
finite coproducts in $\NN_\V$.   Since every object in $\underline{\NN}_\V$ is a 
finite coproduct of copies of $1_\V$, Lemma \ref{lem:powers_3} implies that conditions
2 and 3 are equivalent.
\end{proof}

\begin{lemma}
\label{lem:powers_5}
Given a $\V$-theory $(\T,\tau)$ and a $\V$-functor $\mu \maps \T \to \C$,
the following conditions are equivalent:
\begin{itemize}
\item $\mu$ is a model of $(\T,\tau)$,
\item $\mu$ preserves finite $\V$-products,
\item $\mu$ preserves powers by objects of $\NN_\V$.
\end{itemize}
\end{lemma}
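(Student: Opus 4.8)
The plan is to mirror the proof of Lemma \ref{lem:powers_4}, pushing everything down to Lemma \ref{lem:powers_3}. The first two conditions are equivalent by definition: a model of $(\T,\tau)$ in $\C$ is, by Definition \ref{defn:VMod}, exactly a $\V$-functor $\mu \maps \T \to \C$ that preserves finite $\V$-products, so there is nothing to verify there.

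For the remaining equivalence, I would first observe that every object of $\T$ is a finite $\V$-product of copies of the single object $t = \tau(1_\V)$. Indeed, since $\A_\V = \underline{\NN}_\V^\op$, every object of $\A_\V$ is a finite $\V$-product of copies of $1_\V$: in $\underline{\NN}_\V$ each object $n_\V$ is the $n$-fold coproduct of $1_\V$ by construction, and as noted in Section \ref{sec:enrichment} finite coproducts in $\NN_\V$ are $\V$-coproducts in $\underline{\NN}_\V$, hence finite $\V$-products in $\A_\V$. Now $\tau \maps \A_\V \to \T$ is the identity on objects and preserves finite $\V$-products, so it carries this universal cone to one exhibiting each object $n_\V$ of $\T$ as the $n$-fold $\V$-product of $\tau(1_\V) = t$. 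Thus $\T$ satisfies the hypothesis of Lemma \ref{lem:powers_3}: every object is an $n$-fold $\V$-product $c^n$ of a single object.

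Applying Lemma \ref{lem:powers_3} with $\C = \T$ then yields immediately that a $\V$-functor $\mu \maps \T \to \C$ preserves finite $\V$-products if and only if it preserves powers by all objects of $\NN_\V$, which closes the chain of equivalences. The only step with genuine content is the verification that the objects of $\T$ are finite $\V$-products of $t$, which is exactly where the two hypotheses on $\tau$ — identity on objects, and preservation of finite $\V$-products — do their work; once Lemma \ref{lem:powers_3} is in hand, the rest is purely formal, just as with Lemma \ref{lem:powers_4}.
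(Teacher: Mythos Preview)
Your proof is correct and follows essentially the same route as the paper: establish that conditions 1 and 2 are equivalent by definition, then use the fact that $\tau$ is identity-on-objects and preserves finite $\V$-products to see that every object of $\T$ is a finite $\V$-power of $t$, and finally invoke Lemma~\ref{lem:powers_3}. You simply spell out in more detail why every object of $\T$ has the form $t^n$, whereas the paper asserts this in one line.
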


\begin{proof}
Conditions 1 and 2 are equivalent by definition.   Since $\tau$ is the identity on
objects and preserves $\V$-products each object of $\T$ is of the form $t^n$ 
where $t = \tau(1_\V)$.   Thus, Lemma \ref{lem:powers_3} implies that
conditions 2 and 3 are equivalent.
\end{proof}
\nocite{*}
\bibliographystyle{eptcs}
\bibliography{williams}
\end{document}